\theoremstyle{plain}
\newtheorem{theorem}{Theorem}[section]
\newtheorem{proposition}[theorem]{Proposition}
\newtheorem{lemma}[theorem]{Lemma}
\numberwithin{theorem}{section}
\numberwithin{equation}{section}
\newcommand{\average}{{\mathchoice {\kern1ex\vcenter{\hrule height.4pt
width 6pt depth0pt} \kern-9.7pt} {\kern1ex\vcenter{\hrule
height.4pt width 4.3pt depth0pt} \kern-7pt} {} {} }}
\def\R{\mathbb{R}}
\renewcommand{\b }{\beta }
\renewcommand{\d}{\delta }
\newcommand{\D }{\Delta }
\newcommand{\e }{\varepsilon }
\newcommand{\G }{\Gamma}
\renewcommand{\l }{\lambda }
\newcommand{\n }{\nabla }
\renewcommand{\phi}{\varphi}
\newcommand{\s }{\sigma }
\renewcommand{\O }{\Omega }
\newcommand{\ov}{\overline}
\newcommand{\be}{\begin{equation}}
\newcommand{\ee}{\end{equation}}
\newcommand{\de}{\partial}
\newcommand{\ti}{\widetilde}
\newcommand{\calO }{\mathcal{O}}
\newcommand{\calD }{\mathcal{D}}
\newcommand{\N}{\mathbb{N}}
\newcommand{\cC}{{\mathcal C}}
\newcommand{\cR}{{\mathcal R}}
\newcommand{\B}{{Q}}
\renewcommand{\epsilon}{\varepsilon}
\begin{document}
 
\date{\today}
\title[Mass effect on an elliptic PDE involving two Hardy-Sobolev critical exponents]{Mass effect on an elliptic PDE involving two Hardy-Sobolev critical exponents}
\author{El Hadji Abdoulaye THIAM}
\address{H. E. A. T. : Université Iba Der Thiam de Thies, UFR des Sciences et Techniques, département de mathématiques, Thies.}
\email{elhadjiabdoulaye.thiam@univ-thies.sn}
\begin{abstract}
We let $\Omega$ be a bounded domain  of $\mathbb{R}^3$ and $\Gamma$ be  a closed curve contained in $\Omega$.  We study existence of positive solutions $u \in H^1_0\left(\Omega\right)$ to the  equation
$$
-\Delta u+hu=\l\rho^{-s_1}_\Gamma u^{5-2s_1}+\rho^{-s_2}_\Gamma u^{5-2s_2} \qquad \textrm{ in } \Omega
$$
where $h$ is a continuous function and $\rho_\Gamma$ is the distance function to $\Gamma$. We prove existence of solutions depending on the regular part of the Green function of linear operator. We prove the existence of positive mountain pass solutions for this Euler-Lagrange equation depending on the mass which is the regular part of the Green function of the linear operator $-\D+h$.
\end{abstract}
\address{H. E. A. T. : Université Iba Der Thiam de Thies, UFR des Sciences et Techniques, département de mathématiques, Thies.}
\email{elhadjiabdoulaye.thiam@univ-thies.sn}
\maketitle
\textbf{Key Words}: Two Hardy-Sobolev critical exponents; Green function; Positive mass;  Mountain Pass solution; Curve singularity.
\section{Introduction}
In this paper, we are concerned with the mass effect on the existence of mountain pass solutions of the following nonlinear partial differential equation involving two Hardy-Sobolev critical exponents in $\R^3$. More precisely, letting $h$ be a continuous function and $\l$ be a real parameter, we consider
\begin{equation}\label{Euler-Lagrange11}
\begin{cases}
\displaystyle-\Delta u(x)+ h u(x)=\l \frac{u^{5-2s_1}(x)}{\rho_\G^{s_1}(x)}+\frac{u^{5-2s_2}(x)}{\rho_\G^{s_2}(x)} \qquad & \textrm{ in $\O$}\\\\
u(x)>0 \qquad \textrm{ and } \qquad u(x)=0 &\textrm{ on $\de \Omega$},
\end{cases}
\end{equation}
where $\rho_\G(x):=\inf_{y \in \Gamma}|y-x|$ is the distance function to the curve $\Gamma$ and for  $0< s_2 <s_1<2$, $2^*_{s_1}:=6-2s_1$ and $2^*_{s_2}:=6-2s_2$ are two critical Hardy-Sobolev exponents.

To study the equation  \eqref{Euler-Lagrange11}, we consider the following non-linear functional $\Psi: H^1_0(\O) \to \R$ defined by:
\begin{equation}\label{Functional}
\Psi(u):=\frac{1}{2} \int_\O |\n u|^2 dx+\frac{1}{2} \int_\O h(x) u^2 dx- \frac{\l}{2^*_{s_1}} \int_\O \rho_\G^{-s_1}(x) |u|^{2^*_{s_1}} dx-\frac{1}{2^*_{s_2}} \int_\O \rho_\G^{-s_2}(x) |u|^{2^*_{s_2}} dx.
\end{equation}
Then there exists a positive constant $r>0$ and $u_0 \in H^1_0(\Omega)$ such that $\|u_0\|_{H^1_0(\O)}>r$ and
$$
\inf_{\|u\|_{H^1_0(\O)}=r} \Psi(u) >\Psi(0) \geq \Phi(u_0), 
$$
see for instance the paper of the author [\cite{Thiam1}, Lemma 4.5]. 
Then the point $(0, \Psi(0))$ is separated from the point $(u_0, \Psi(u_0))$ by a ring of mountains. Set
\begin{equation}\label{Heat}
c^*:=\inf_{P \in \mathcal{P}} \max_{v \in P} \Psi(v),
\end{equation}
where $\mathcal{P}$ is the class of continuous paths in $H^1_0(\O)$ connecting $0$ to $u_0$. Since $2^*_{s_2}>2^*_{s_1}$, the function $t \longmapsto \Psi(tv)$ has the unique maximum for $t \geq 0$. Furthermore, we have
$$
c^*:=\inf_{u \in H^1_0(\O), u \geq 0, u \neq 0} \max_{t \geq 0} \Psi(tu).
$$
Due to the fact that the embedding of $H^1_0(\O)$ into the weighted Lebesgue spaces $L^{2^*_{si}}(\rho_\Gamma^{-si} dx)$ is not compact, the functional $\Psi$  does not satisfy the Palais-Smale condition. Therefore, in general $c^*$ might not be a critical value for $\Psi$.

To recover compactness, we study the following non-linear problem: let $x=(y, z) \in \R\times \R^{2}$ and consider 
\begin{equation}\label{A1A1}
\begin{cases}
\displaystyle-\Delta u=\l\frac{u^{2^*_{s_1}-1}(x)}{|z|^{s_1}}+\frac{u^{2^*_{s_2}-1}}{|z|^{s_2}} \qquad & \textrm{ in $\R^3$}\\\
u(x)>0 & \textrm{ in $\R^3$}.
\end{cases}
\end{equation}
To obtain solutions of \eqref{A1A1}, we consider the functional $\Phi: \calD^{1,2}(\R^N)$ defined by
\begin{equation}\label{Functional1}
\Phi(u):=\frac{1}{2} \int_{\R^3} |\n u|^2 dx- \frac{\l}{2^*_{s_1}} \int_{\R^3} |z|^{-s_1}|u|^{2^*_{s_1}} dx-\frac{1}{2^*_{s_2}} \int_{\R^3} |z|^{-s_2}|u|^{2^*_{s_2}} dx.
\end{equation}
Next, we define
$$
\beta^*:=\inf_{u \in D^{1, 2}(\R^3), u \geq 0, u \neq 0} \max_{t \geq 0} \Phi(tu).
$$
Then we get compactness provided
$$
c^*<\beta^*,
$$
see Proposition 4.3 in \cite{Thiam1}. Therefore the existence, symmetry and decay estimates of non-trivial solution $w\in \calD^{1,2}(\R^3)$ of \eqref{A1A1} play an important role in problem \eqref{Euler-Lagrange11}. Then we have the following results.
\begin{proposition}\label{TheoremA}
Let $0 \leq s_2<s_1<2$, $\l \in \R$. Then equation
\begin{equation}\label{Euler-Lagrange1}
\begin{cases}
\displaystyle-\Delta u=\l\frac{u^{2^*_{s_1}-1}(x)}{|z|^{s_1}}+\frac{u^{2^*_{s_2}-1}}{|z|^{s_2}} \qquad & \textrm{ in $\R^3$}\\\
u(x)>0 & \textrm{ in $\R^3$}
\end{cases}
\end{equation}
has a positive ground state solution $w \in \calD^{1,2}(\R^3)$ depending only on $|y|$ and $|z|$. 
Moreover
\begin{equation}\label{eq:up-low-bound-w-3D}
\frac{C_1}{1+|x|} \leq w(x) \leq \frac{C_2}{1+|x|} \qquad \textrm{ in   $\R^3$}.
\end{equation}
Moreover,  for   $|x|= |(t,z)|\leq 1$, we have 
\begin{equation}\label{eq:up-low-bound-w-3D1}
|\n w (x)|+ |x| |D^2 w (x)|\leq C_2 |z|^{1-s_1}
\end{equation}
and if   $|x|= |(t,z)|\geq 1$, we have 
\begin{equation}\label{eq:up-low-bound-w-3D2}
|\n w(x)|+ |x| |D^2 w(x)|\leq C_2 \max(1, |z|^{-s_1})|x|^{1 -N}.
\end{equation}
\end{proposition}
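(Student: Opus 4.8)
\medskip
\noindent\emph{Variational framework and symmetrisation.} The plan is to obtain the ground state by a constrained variational argument and then extract the pointwise bounds by elliptic bootstrap together with a Kelvin transform. First I would note that the Hardy--Sobolev--Maz'ya inequality $\bigl(\int_{\R^3}|z|^{-s}|u|^{2^*_s}\,dx\bigr)^{2/2^*_s}\le C_s\int_{\R^3}|\n u|^2\,dx$ ($0\le s<2$) makes $\Phi$ well defined and $C^1$ on $\calD^{1,2}(\R^3)$, and that, since $2<2^*_{s_1}<2^*_{s_2}$, the fibering map $t\mapsto\Phi(tu)$ has for every $u\ge0$, $u\neq0$, a unique positive maximum; hence $\beta^*>0$ is simultaneously the mountain--pass level of $\Phi$ and the infimum of $\Phi$ over its Nehari manifold, and Palais--Smale sequences at level $\beta^*$ are bounded. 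To kill the translational non-compactness I would work in the closed subspace $E\subset\calD^{1,2}(\R^3)$ of functions depending only on $|y|$ and $|z|$. For $\l\ge0$ a two-step Schwarz rearrangement (symmetric decreasing rearrangement in $z$ at each fixed $y$, then Steiner symmetrisation in $y$ at each fixed $z$) decreases each term of $\Phi(tu)$ separately --- it lowers $\int|\n u|^2$ and raises the two weighted integrals $\int|z|^{-s_i}|u|^{2^*_{s_i}}$, since $|z|^{-s}$ is $y$-independent and radially non-increasing in $z$ --- so the Nehari infimum over $E$ is still $\beta^*$; by symmetric criticality a critical point of $\Phi|_E$ at that level solves \eqref{Euler-Lagrange1}. (For $\l<0$ I would instead run the compactness argument below on the full space $\calD^{1,2}(\R^3)$ and recover the symmetry afterwards by the moving-plane method.)

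\noindent\emph{Compactness.} Let $(u_n)$ be a Palais--Smale sequence at level $\beta^*$ (in $E$, resp.\ in $\calD^{1,2}(\R^3)$); it is bounded. Using invariance of $\Phi$ and of the Nehari constraint under the dilations $u\mapsto t^{1/2}u(t\,\cdot)$ --- the only remaining non-compact symmetry in $E$ --- I would renormalise the scale (so that, say, half of $\int|\n u_n|^2$ lies inside the unit ball) and pass to a weak limit $w$. The crux is that $w\neq0$ and $u_n\to w$ strongly; this I would get from a concentration--compactness / profile decomposition, excluding the bad bubbles as follows: (i) a bubble concentrating away from the singular axis $\{z=0\}$ would, after blow-up, give a positive finite-energy solution in $\R^3$ of $-\D v=\l c_1 v^{2^*_{s_1}-1}+c_2 v^{2^*_{s_2}-1}$ with constants $c_i>0$, which is impossible because $2^*_{s_i}-1<(N+2)/(N-2)=5$ when $s_i>0$ (Gidas--Spruck), and when $s_2=0$ a Pohozaev identity still forbids it unless $\l=0$, in which case one uses the strict inequality $\beta^*<\beta_0$ ($\beta_0$ the Aubin--Talenti level of $-\D v=v^5$), checked with a test function concentrated near the axis; (ii) a bubble on the axis reproduces, after blow-up, a solution of \eqref{Euler-Lagrange1} itself, hence carries energy $\ge\beta^*$, so two or more such bubbles would force the level to be $\ge2\beta^*>\beta^*$. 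The surviving case is a single axial bubble, which after rescaling is a ground state; in all cases $\beta^*$ is attained by a nontrivial critical point $w$, which (when working in $E$) satisfies $w=w(|y|,|z|)$.

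\noindent\emph{Positivity, symmetry and decay.} Since $\Phi(|u|)=\Phi(u)$ and $|u|$ stays on the Nehari manifold we may take $w\ge0$; rewriting the equation as $-\D w+\bigl(|\l|\,|z|^{-s_1}w^{2^*_{s_1}-2}\bigr)w=|z|^{-s_2}w^{2^*_{s_2}-1}\ge0$ with a nonnegative, locally integrable zero-order term, the strong maximum principle gives $w>0$ in $\R^3$. (In the case $\l<0$ handled on the full space, the moving-plane method in the $y$-direction --- using $y$-translation invariance and the decay $w\to0$ --- and across the hyperplanes through the $y$-axis --- using that $|z|^{-s}$ is symmetric and decreasing under those reflections --- then yields $w=w(|y|,|z|)$.) For the decay, I would first obtain $w\in L^\infty(\R^3)$ by a Moser/Brezis--Kato iteration adapted to the Hardy--Sobolev weight (using $s_i<2$), whence $w(x)\to0$ as $|x|\to\infty$ by a barrier argument. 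A direct computation shows \eqref{Euler-Lagrange1} is invariant under the Kelvin transform $\widehat w(x):=|x|^{-1}w(x/|x|^2)$ --- this is exactly where the identity $2^*_s=6-2s$ is used --- so $\widehat w$ solves the same equation and lies in $\calD^{1,2}(\R^3)$; since the origin is polar for $\calD^{1,2}(\R^3)$ the point singularity of $\widehat w$ at $0$ is removable, and the interior $L^\infty$ bound applied to $\widehat w$ near $0$ gives $w(x)\le C|x|^{-1}$ for large $|x|$, which with global boundedness yields the upper bound in \eqref{eq:up-low-bound-w-3D}. The matching lower bound follows by comparing the positive superharmonic function $w$ (superharmonic for $-\D$, resp.\ for $-\D+c(x)$ when $\l<0$) with the fundamental solution outside a large ball. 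Finally, for \eqref{eq:up-low-bound-w-3D1}--\eqref{eq:up-low-bound-w-3D2} I would use that $f:=\l|z|^{-s_1}w^{2^*_{s_1}-1}+|z|^{-s_2}w^{2^*_{s_2}-1}$ inherits the decay of $w$ and the mild singularity $|z|^{-s_1}$ of the weight near the axis (the $s_1$-term dominating there), and then run interior and weighted Schauder estimates for $-\D w=f$ on balls of radius comparable to $|z|$ near $\{z=0\}$ and to $|x|$ away from it, rescaled to the unit ball.

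\noindent\emph{Main difficulty.} The heart of the proof is the compactness step: a functional carrying two critical Hardy--Sobolev nonlinearities inherits a non-compact invariance group (dilations, and $y$-translations before one restricts to $E$), and one must show that at the ground-state level the only admissible concentration is a single axial bubble. It is the strict subcriticality of $2^*_{s_1}-1$ and $2^*_{s_2}-1$ relative to the ordinary Sobolev exponent --- which pins concentration to the axis --- together with the energy count $\beta^*<2\beta^*$, that makes the dichotomy close; making this weighted concentration--compactness analysis rigorous is the main obstacle. The decay estimates, though technical, are routine once the weighted Schauder machinery and the Kelvin invariance are in place, the one delicate point being the genuinely singular behaviour of the forcing term near $\{z=0\}$.
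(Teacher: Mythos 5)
Your proposal cannot really be checked against an in-paper argument: the present paper states Proposition \ref{TheoremA} without proof, importing it from the companion preprint \cite{Thiam1} and the machinery of \cite{Fall-Thiam}. Your route (Nehari/mountain-pass level, restriction to the cylindrically symmetric subspace plus symmetric criticality, concentration--compactness with exclusion of off-axis bubbles, Moser/Brezis--Kato iteration, Kelvin transform for the $|x|^{-1}$ decay, scaled elliptic estimates for the derivative bounds) is exactly the standard route used in those references, and the Kelvin-invariance computation you invoke is correct. Two points, however, are genuine gaps as written.

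First, the dependence on the sign of $\l$. Your symmetrization step (rearrangement raises the weighted terms, which helps only if they enter with a favourable sign) and, more seriously, your compactness step are tied to $\l\ge 0$, and the deferral of $\l<0$ to ``moving planes afterwards'' does not close the argument. Indeed, for $\l<0$ and $s_2=0$ (allowed by the statement) the level $\b^*$ is \emph{not} attained: since the $s_1$-term is then nonnegative one has $\b^*\ge \b_0$, the Aubin--Talenti level of $-\D v=v^5$; Talenti bubbles concentrating at a point off the axis make the $s_1$-term $O(\mu^{s_1})$ and give $\b^*\le\b_0$; and if some $w\neq 0$ achieved $\max_{t\ge0}\Phi(tw)=\b_0$, dropping the strictly positive $s_1$-term would give $\max_{t\ge0}\Phi_0(tw)<\b_0$, contradicting the definition of $\b_0$. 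So the strict inequality $\b^*<\b_0$ you rely on when $s_2=0$ holds only for $\l>0$, and for $\l<0$ the very attainment at level $\b^*$ (which is what the main theorem of the paper uses, via $\Phi(w)=\b^*$) fails in this range. At minimum your case analysis must be restricted to $\l\ge 0$ (with $\l>0$ when $s_2=0$), and the $\l<0$ case cannot be waved through. A minor related slip: an off-axis blow-up at scale $\mu_n$ produces, after rescaling, a right-hand side carrying factors $\mu_n^{s_i}$, so the limit equation is $-\D v=0$ (no Gidas--Spruck needed) rather than an autonomous equation with positive constants; harmless, but it shows the profile analysis is only sketched.

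Second, the derivative estimates \eqref{eq:up-low-bound-w-3D1}--\eqref{eq:up-low-bound-w-3D2} do not follow from ``interior Schauder on balls of radius comparable to $|z|$, rescaled to the unit ball''. That argument, with the information $\|w\|_{L^\infty}\le C$ and $|f|\le C|z|^{-s_1}$ near the axis, yields $|\n w(x)|\le C\bigl(|z|^{-1}+|z|^{1-s_1}\bigr)\sim C|z|^{-1}$, which is strictly weaker than the claimed $C|z|^{1-s_1}$ for every $s_1\in(0,2)$. To reach the stated exponent one has to exploit the structure of the equation near $\{z=0\}$, e.g. split $w$ into the Newtonian potential of the singular right-hand side (whose gradient is genuinely $O(|z|^{1-s_1})$ up to smooth terms) plus a remainder solving a problem with better data, i.e. the weighted Schauder analysis carried out in \cite{Fall-Thiam}; note also that for $s_1<1$ the stated bound forces $\n w=0$ on the axis, which no generic rescaled-Schauder argument can produce, so this part of the proof requires the finer expansion of $w$ near the singular line rather than the two-scale covering argument you describe.
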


Next, we let   $G(x,y)$ be  the Dirichlet Green function of the operator $-\D +h$, with zero Dirichlet data. It satisfies 
\be\label{eq:Green-expan-introduction}
\begin{cases}
-\D_x G(x,y)+h(x) G(x,y)=0&  \qquad\textrm{  for every $x\in \O\setminus\{y\}$}\\
G(x,y)=0 &  \qquad\textrm{  for every $x\in\de  \O$.}
\end{cases}
\ee
In addition there exists a continuous function   $\textbf{m}:\O\to \R$ and a positive constant $c>0$ such that  
\be \label{eq:expans-Green}
 G (x,y)=\frac{c}{  |x- y|}+ c\, \textbf{m}(y)+o(1)  \qquad \textrm{ as $x \to y.$} 
\ee
We call the   function   $\textbf{m}:\O\to \R$  the    \textit{mass} of $-\D+h$ in $\O$.   We note that $-\textbf{m}$ is occasionally called the \textit{Robin function} of $-\D+h$ in the literature. Then our main result is the following.
Then we have
\begin{theorem}\label{th:main1}
Let $0\leq s_2 <s_1<2$ and   $\O$  be a   bounded domain of $\R^3$. Consider  $\G$ a smooth closed curve contained in $\O$.
Let  $h$ be  a continuous function such that the linear operator $-\D+h$ is coercive. We assume that there exists $y_0 \in \Gamma$ such that 
\begin{equation}\label{ExistenceAssumption}
m(y_0)>0.
\end{equation}
Moreover there exists $u \in H^1_0(\Omega)\setminus \lbrace 0\rbrace$ non-negative solution of
$$
-\Delta u(x)+ h u(x)=\l \frac{u^{5-2s_1}(x)}{\rho_\G^{s_1}(x)}+\frac{u^{5-2s_2}(x)}{\rho_\G^{s_2}(x)} \qquad \textrm{ in $\O$}.
$$
\end{theorem}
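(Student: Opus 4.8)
The plan is to run the mountain pass scheme attached to $\Psi$ and to reduce the whole statement to the strict inequality $c^*<\beta^*$, which the hypothesis $m(y_0)>0$ will force. By [\cite{Thiam1}, Lemma~4.5] the functional $\Psi$ of \eqref{Functional} has the mountain pass geometry recalled above, so $0<c^*<+\infty$ and there is a Palais--Smale sequence $(u_n)$ for $\Psi$ at level $c^*$. By Proposition~4.3 of \cite{Thiam1}, once we have established
\be\label{eq:reduction}
c^*<\beta^*,
\ee
$\Psi$ satisfies the Palais--Smale condition at level $c^*$, so $(u_n)$ admits a subsequence converging to some $u\in H^1_0(\O)$ with $\Psi(u)=c^*>0$ and $\Psi'(u)=0$; in particular $u\neq0$. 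Since $c^*=\inf_{v\ge0,\,v\neq0}\max_{t\ge0}\Psi(tv)$, running the deformation argument on the cone of non-negative functions produces such a critical point with $u\ge0$, which is then a non-negative weak solution of \eqref{Euler-Lagrange11}. Hence the proof is reduced to \eqref{eq:reduction}.

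\emph{The trial function.} Because $c^*=\inf_{v\ge0,\,v\neq0}\max_{t\ge0}\Psi(tv)$, it is enough to exhibit, for small $\e>0$, one admissible $v_\e\not\equiv0$ with $\max_{t\ge0}\Psi(tv_\e)<\beta^*$. Fix $y_0\in\G$ with $m(y_0)>0$. In a fixed small tubular neighbourhood of $y_0$ we use Fermi coordinates $(\s,z)\in\R\times\R^2$ along $\G$ (with $y_0$ at the origin), in which $\rho_\G=|z|$ and the Euclidean metric is $(1-\langle z,\kappa(\s)\rangle)^2\,d\s^2+|dz|^2$, $\kappa$ being the curvature vector of $\G$; in particular the volume element is $1-\langle z,\kappa(\s)\rangle$, whose only nonconstant part is odd in $z$. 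Let $w\in\calD^{1,2}(\R^3)$ be the positive ground state of \eqref{Euler-Lagrange1} provided by Proposition~\ref{TheoremA}; it depends only on $|y|$ and $|z|$, being a ground state it satisfies $\max_{t\ge0}\Phi(tw)=\Phi(w)=\beta^*$, it obeys the bounds \eqref{eq:up-low-bound-w-3D1}--\eqref{eq:up-low-bound-w-3D2}, and it has the far-field asymptotics $w(x)=c_0|x|^{-1}+O(|x|^{-2})$. Writing $W_\e$ for the transplant near $y_0$ of the rescaled profile $\e^{-1/2}w(\e^{-1}(\s,z))$, we take
\be\label{eq:trial}
v_\e:=\chi\Big(W_\e-\frac{c_0\,\e^{1/2}}{|x-y_0|}\Big)+a_\e\,G(\cdot,y_0),
\ee
where $\chi$ is a cut-off equal to $1$ near $y_0$ and supported in the tube, and the constant $a_\e$ (of size $\e^{1/2}$) is chosen so that the $|x-y_0|^{-1}$--singularities cancel, so that $v_\e\in H^1_0(\O)$. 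Then $v_\e$ concentrates at $y_0$, and by the expansion \eqref{eq:expans-Green} of $G(\cdot,y_0)$ one has, near $y_0$, $v_\e=W_\e+\mathbf{m}_0\,\e^{1/2}+o(\e^{1/2})$, where $\mathbf{m}_0$ is a positive multiple of $m(y_0)$; that is, the regular part of $G(\cdot,y_0)$ is incorporated into $v_\e$ with the sign of $m(y_0)$.

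\emph{Energy expansion and conclusion.} The technical heart is to show that, as $\e\to0$,
\be\label{eq:keyexpansion}
\max_{t\ge0}\Psi(tv_\e)=\beta^*-c_1\,m(y_0)\,\e+o(\e)
\ee
for some constant $c_1>0$. This is obtained by expanding $\int_\O(|\n v_\e|^2+h\,v_\e^2)\,dx$ and $\int_\O\rho_\G^{-s_i}|v_\e|^{2^*_{s_i}}\,dx$ up to $o(\e)$: one uses the exact scale invariances of the $\calD^{1,2}$--norm and of the two Hardy--Sobolev integrals of $w$; the decay bounds \eqref{eq:up-low-bound-w-3D1}--\eqref{eq:up-low-bound-w-3D2} to estimate all remainders; the expansion \eqref{eq:expans-Green} to identify the $m(y_0)$--contribution coming from $a_\e G(\cdot,y_0)$; and the radial symmetry of $w$ in $z$ to discard the odd-in-$z$ metric corrections produced by $\G$. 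Since $t\mapsto\Psi(tv_\e)$ has, for small $\e$, a unique maximiser lying in a fixed compact subset of $(0,+\infty)$, an elementary Taylor/envelope comparison with the polynomial $t\mapsto\Phi(tw)$ then yields \eqref{eq:keyexpansion}. As $m(y_0)>0$, the right-hand side of \eqref{eq:keyexpansion} is $<\beta^*$ for all small $\e$, which gives \eqref{eq:reduction} and hence Theorem~\ref{th:main1}.

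\emph{The main obstacle.} Everything hinges on the sharp expansion \eqref{eq:keyexpansion}. In dimension $3$ the ground state decays only like $|x|^{-1}$, so the boundary/Green--function correction, the lower-order term $\int_\O h\,v_\e^2$, the tail of $\int_{\R^3}|\n w|^2$, and the contribution of the curvature of $\G$ are each a priori of the same order $O(\e)$ as the mass term one wants to extract. The computation must therefore be built around the Green function of $-\D+h$ so that the boundary and $h$--terms reorganise, via \eqref{eq:expans-Green}, into the single coefficient $c_1\,m(y_0)$; it must exploit the symmetry of $w$ so that the odd-in-$z$ metric and volume-element corrections integrate to zero, the residual curvature term --- which is proportional to the radius of the tubular neighbourhood --- being then dominated by the $m(y_0)$--term once that neighbourhood is taken small; and it must use \eqref{eq:up-low-bound-w-3D1}--\eqref{eq:up-low-bound-w-3D2} to guarantee that all remaining integrals are genuinely $o(\e)$. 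Keeping track of the exact signs and orders throughout this expansion is the crux of the proof.
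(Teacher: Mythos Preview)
Your proposal is correct and follows essentially the same route as the paper: reduce to $c^*<\beta^*$, build a test function by gluing the rescaled ground state $w$ to the Green function $G(\cdot,y_0)$ so that the regular part $M_{y_0}$ appears, and expand the energy to order $\e$ to extract the term $-c_1\,m(y_0)\,\e$. The one place where your sketch is looser than the paper is the far-field behaviour of $w$: you assert $w(x)=c_0|x|^{-1}+O(|x|^{-2})$, but Proposition~\ref{TheoremA} only gives two-sided bounds, and the paper instead obtains the constant $\mathbf c$ as a \emph{subsequential} limit of $v_\e=\e^{-1}w(\cdot/\e)$ via Arzel\`a--Ascoli and B\^ocher's theorem (Lemma~\ref{lem:v-to-cR}), which is exactly what is needed to pass to the limit in the boundary integrals of Lemmas~\ref{lem:expans-num-3D}--\ref{lem:expans-denom-3D}.
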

In contrast to the case $N \geq 4$ (see \cite{Thiam1} for more details), the existence of solution does not depend on the local geometry of the singularity but on the location of the curve $\Gamma$.  Besides in the study of Hardy-Sobolev equations in domains with interior singularity for the Three dimensional case, the effect of the mass plays an important role in the existence of positive solutions. For Hardy-Sobolev inequality on Riemannian manifolds with singularity a point, Jaber \cite{Jaber1} proved the existence of positive solutions when the mass is positive. We refer also to \cite{Jaber2} for existence of mountain pass solution to a Hardy-Sobolev equation with an additional perturbation term. For the Hardy-Sobolev equations on domains with singularity a curve, we refer to the papers of the author and Fall \cite{Fall-Thiam} and the author and  Ijaodoro \cite{Esther}. We also suggest to the interested readers the nice work of Schoen-Yau \cite{Schoen-Yau} and \cite{Schoen-Yau1} for more details related to the positive mass theorem. We also mention that this paper is the 3-dimensional version of the work of thye author \cite{Thiam1}.\\

The proof of Theorem \ref{th:main1} relies on test function methods. Namely we build appropriate test functions allowing to compare $c^*$ and $\b^*$. Near the concentration point $y_0 \in \Gamma$, the test function is similar to the test function in the case $N \geq 4$ but away from it is replaced with the regular part of the Green function which makes apear the mass, see  Section \ref{ss:proof-of-th-3D}.
%
%
%
%
%
%
%
%
%
%
%
%
%
%
%
%
%
%
%
%
%
%
%
%
%
%
%
%
%
%
%
%
\section{Tool Box}\label{s:3D-case}
We consider the function
$$
\cR:\R^3\setminus\{0\}\to \R,    \qquad x\mapsto  \cR(x)=\frac{1}{|x|}
$$
which  satisfies
\begin{equation}\label{eq:Green-R3-3D}
-\D \cR=0 \qquad \textrm{ in $\R^3\setminus\{0\}$. }  
\end{equation}
We denote by $G$ the solution to the equation 
\begin{equation}\label{eq:Green-3D}
\begin{cases}
-\D_x G(y, \cdot)+h G(y,\cdot)=0& \qquad \textrm{  in $\O\setminus \{y\}$. }  \\
G(y,\cdot )=0&   \qquad \textrm{  on $\de \O $, }
\end{cases}
\end{equation}
and satisfying
\be\label{eq:expand-Green-trace}
G(x,y)=  \cR(x-y)+O(1)\qquad\textrm{ for $x, y\in \O$ and $x\not= y$.}
\ee
We note that $G$ is proportional to the Green function of $-\D+h$ with zero Dirichlet data.\\
We let $\chi\in C^\infty_c(-2,2)$ with $\chi \equiv 1$ on $(-1,1)$ and $0\leq \chi<1$. For $r>0$,   we  consider the cylindrical symmetric cut-off function
\be\label{eq:def-cut-off-cylind} 
\eta_r(t,z)=\chi\left(\frac{|t|+|z|}{r} \right) \qquad \qquad\textrm{ for every  $(t,z)\in \R\times \R^2$}.
\ee
It is clear that 
$$
\eta_r\equiv 1\quad \textrm{ in $\B_r$},\qquad \eta_r\in H^1_0({Q}_{2r}),\qquad |\n \eta_r|\leq  \frac{C}{r} \quad\textrm{ in $\R^3$}.
$$
For $y_0\in \O$, we let  $r_0\in (0,1)$  such that   
\be\label{eq:def-r0} 
y_0+ Q_{2r_0}\subset\O. 
 \ee  
We define the function $M_{y_0}: Q_{2r_0}\to \R$ given by
\begin{equation}\label{C9}
M_{y_0}(x):=  G (y_0,x+y_0)-{\eta_r}(x)\frac{1}{|x|}     \qquad \textrm{ for every $x\in Q_{2r_0}$}.
\end{equation}
 It follows from  \eqref{eq:expand-Green-trace} that  $M_{y_0}\in  L^\infty(Q_{r_0})$. By \eqref{eq:Green-3D} and \eqref{eq:Green-R3-3D}, 
 $$
|-\D {M}_{y_0}(x)+h(x) {M}_{y_0}(x)|\leq \frac{C}{|x|}= C \cR(x) \qquad \textrm{ for every  $x\in Q_{r_0}$},
 $$
 whereas $\cR\in L^p( Q_{r_0})$ for every $p\in (1,3)$. Hence by
elliptic regularity theory, $M_{y_0}\in W^{2,p}(Q_{r_0/2})$ for every $p\in (1,3)$. Therefore by Morrey's embdding theorem, we deduce that 
\be \label{eq:regul-beta}
\|M_{y_0}\|_ {C^{1,\varrho}(Q_{r_0/2})}\leq C \qquad \textrm{ for every $\varrho\in (0,1)$.}
\ee
In view of \eqref{eq:expans-Green}, the mass of the operator $-\D+h$ in $\O$ at the point $y_0\in   \O$ is given by  
\be \label{eq:def-mass}
  \textbf{m}(y_0)={M}_{y_0}(0).
\ee
Next, we have the following result which will be important in the sequel.
\begin{lemma}\label{lem:v-to-cR}
Consider the function $v_\e:  \R^3\setminus\{0\}\to \R$ given by 
$$
v_\e(x)= \e^{-1} w\left(\frac{x}{\e}\right).
$$
Then there exists a   constant $\textbf{c}>0$ and a sequence $(\e_n)_{n\in \N}$    (still denoted by $\e$)  such that 
$$
v_\e (x) \to  \frac{\textbf{c}}{|x|}  \qquad \textrm{ and } \qquad \n v_\e (x) \to -\textbf{c}  \frac{x}{|x|^3} \qquad \textrm{ for  all most every  $x\in \R^3 $ } 
$$
and 
\be\label{eq:nv-eps-to-nv-C1}
v_\e (x) \to  \frac{\textbf{c}}{|x|}  \qquad \textrm{ and } \qquad  \n v_\e (x) \to    -\textbf{c}  \frac{x}{|x|^3} \qquad \textrm{ for  every   $x\in \R^3\setminus\{z=0\}$.  } 
\ee
\end{lemma}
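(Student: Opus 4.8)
The plan is to exploit the scaling structure of $v_\e$ together with the precise decay estimates for the ground state $w$ from Proposition~\ref{TheoremA}. First I would observe that $v_\e$ is, up to normalization, a solution of the rescaled equation, and more importantly that the family $\{v_\e\}$ is bounded in a suitable sense away from the origin. Indeed, from \eqref{eq:up-low-bound-w-3D} we have $w(\xi)\le C_2/(1+|\xi|)$, so $v_\e(x)=\e^{-1}w(x/\e)\le \e^{-1}C_2/(1+|x|/\e)=C_2/(\e+|x|)\le C_2/|x|$ uniformly in $\e$; similarly $v_\e(x)\ge C_1\e/(\e+|x|)^{?}$ — more precisely $v_\e(x)\ge C_1/(\e+|x|)\to C_1/|x|$ pointwise. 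The gradient estimates \eqref{eq:up-low-bound-w-3D1}--\eqref{eq:up-low-bound-w-3D2} give, after scaling, $|\n v_\e(x)|\le C\max(1,|z/\e|^{-s_1})|x|^{-N}\cdot(\text{scaling factor})$, which one checks is bounded by $C|z|^{-s_1}|x|^{-N}$ or $C|x|^{-2}$ depending on the regime, locally uniformly on $\R^3\setminus\{z=0\}$.

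Next I would set up a compactness argument. On any compact subset $K\subset\R^3\setminus\{z=0\}$ (or even $\R^3\setminus\{0\}$ for the $L^1_{loc}$-type statement), the bounds above show $\{v_\e\}$ is bounded in $C^1(K)$, in fact equicontinuous with a uniform Hölder bound on the gradient coming from the $C^{1,\varrho}$-type regularity one gets from the equation away from the singular set (the right-hand side $\l|z|^{-s_1}v_\e^{2_{s_1}^*-1}+|z|^{-s_2}v_\e^{2_{s_2}^*-1}$ is locally bounded on $K$). By Arzelà--Ascoli and a diagonal argument over an exhaustion of $\R^3\setminus\{z=0\}$ by compacts, I extract a subsequence $\e_n\to 0$ such that $v_{\e_n}\to v$ and $\n v_{\e_n}\to \n v$ locally uniformly on $\R^3\setminus\{z=0\}$, hence also a.e. on $\R^3$. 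Passing to the limit in the equation, $v$ is a nonnegative distributional (hence, by regularity, classical on $\R^3\setminus\{0\}$) solution of $-\D v=0$ on $\R^3\setminus\{0\}$, since each nonlinear term scales with a strictly positive power of $\e$ in front — one computes that $\e^{-1}w(x/\e)$ plugged into the $s_i$-term produces a factor $\e^{(2_{s_i}^*-2)-(2-s_i)}=\e^{2-2s_i\cdots}$; the key point is that the cubic-type nonlinearity, being subcritical relative to the $\e^{-1}$ scaling used here, vanishes in the limit, so $v$ is harmonic off the origin.

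Then I would identify $v$. From the two-sided bound $C_1/|x|\le v(x)\le C_2/|x|$ inherited in the limit, $v$ is a positive harmonic function on $\R^3\setminus\{0\}$ comparable to $|x|^{-1}$; moreover $v$ is radial (limit of radial-ish functions — here one uses that $w$ depends only on $|y|$ and $|z|$, and in the limit $\e\to 0$ this symmetry upgrades: I would check that $v$ is actually rotationally symmetric, or at least that the Bôcher-type classification applies). A positive harmonic function on $\R^3\setminus\{0\}$ that is $O(|x|^{-1})$ at infinity and bounded above by $C/|x|$ near $0$ must, by Bôcher's theorem, be of the form $\mathbf{c}|x|^{-1}+($harmonic on all of $\R^3)$, and the growth constraints force the entire part to be a constant, which the decay at infinity kills; the nondegeneracy $v\ge C_1/|x|$ gives $\mathbf{c}>0$. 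This yields $v(x)=\mathbf{c}/|x|$ and $\n v(x)=-\mathbf{c}\,x/|x|^3$, which is \eqref{eq:nv-eps-to-nv-C1}; the a.e.\ convergence on all of $\R^3$ follows since $\{z=0\}$ has measure zero.

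The main obstacle I anticipate is \emph{justifying the uniform gradient bounds and equicontinuity up to the hyperplane $\{z=0\}$ minus the origin}, since the estimates \eqref{eq:up-low-bound-w-3D1}--\eqref{eq:up-low-bound-w-3D2} degenerate as $|z|\to 0$ (the factor $|z|^{-s_1}$ blows up). One must be careful that on a compact $K\subset\R^3\setminus\{z=0\}$ we genuinely stay away from $\{z=0\}$ so that $|z|\ge\d_K>0$ on $K$, making all the weights harmless — so the statement \eqref{eq:nv-eps-to-nv-C1} is correctly restricted to $\R^3\setminus\{z=0\}$. The second delicate point is ruling out that the nonlinear terms contribute in the limit: this is a bookkeeping check on the scaling exponents $2_{s_i}^*-1$ versus the homogeneity of $v_\e$, and one should verify it produces a strictly positive power of $\e$ in each term (this is where $N=3$ and the specific $\e^{-1}$ normalization matter).
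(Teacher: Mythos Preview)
Your plan is correct and follows essentially the same route as the paper: uniform $C^2_{loc}$ bounds on $\R^3\setminus\{z=0\}$ from Proposition~\ref{TheoremA}, Arzel\`a--Ascoli to extract a $C^1_{loc}$ limit $v$, the scaling computation showing the rescaled equation is $-\Delta v_\e=\lambda\e^{2-s_1}|z|^{-s_1}v_\e^{5-2s_1}+\e^{2-s_2}|z|^{-s_2}v_\e^{5-2s_2}$ so the right-hand side vanishes, and B\^ocher's theorem together with the two-sided bound $C_1/|x|\le v\le C_2/|x|$ to identify $v=\mathbf{c}/|x|$. One small remark: your aside about radiality of $v$ is unnecessary (B\^ocher does not require it), and the clean way to get harmonicity on all of $\R^3\setminus\{0\}$ rather than just $\R^3\setminus\{z=0\}$ is exactly the distributional argument you mention---test against $\varphi\in C^\infty_c(\R^3\setminus\{0\})$, integrate by parts onto $v_\e$, and pass to the limit by dominated convergence using $v_\e\le C_2/|x|\in L^1_{loc}$.
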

\begin{proof}
By Proposition \ref{TheoremA}, we have that  $(v_\e)$ is bounded in $C^2_{loc}(\R^3\setminus\{z=0\})$. Therefore by Arzel\'a-Ascolli's theorem $v_\e$ converges to $v$ in $C^1_{loc}(\R^3\setminus\{z=0\})$. In particular,
$$
v_\e \to v  \qquad \textrm{ and } \qquad \n v_\e \to \n v \qquad\textrm{ almost every where on $\R^3$.}
$$  
It is plain, from \eqref{eq:up-low-bound-w-3D}, that 
\be\label{eq:up-low-bound-v-eps-3D}
 0<\frac{C_1}{\e+|x|} \leq v_\e(x) \leq \frac{C_2}{\e+ |x|} \qquad \textrm{ for almost every $x\in   \R^3 $.}
\ee
By \eqref{A1A1}, we have
\begin{equation}\label{eq:eq-for-v-eps}
-\D v_{\e}(x)=\l {\e}^{2-s_1} \frac{v_\e^{5-2s_1}(x)}{|z|^{s_1}}+{\e}^{2-s_2} \frac{v_\e^{5-2s_2}(x)}{|z|^{s_2}} \qquad \textrm{ in } \R^3.
\end{equation}
Newt, we let $\varphi \in C^\infty_c\left(\R^3\setminus \lbrace 0\rbrace\right)$. We multiply \eqref{eq:eq-for-v-eps} by $\varphi$ and integrate by parts to get
$$
-\int_{\R^3} v_{\e} \D \varphi dx= \l {\e}^{2-s_1} \int_{\R^3} \frac{v_\e^{5-2s_1}(x)}{|z|^{s_1}} \varphi(x) dx+{\e}^{2-s_2} \int_{\R^3} \frac{v_\e^{5-2s_2}(x)}{|z|^{s_2}} \varphi(x) dx.
$$
By \eqref{eq:up-low-bound-v-eps-3D} and the dominated convergence theorem, we can pass to the limit in the above identity and deduce that 
$$
\Delta v=0 \qquad \quad\textrm{ in } \calD^\prime\left(\R^3\setminus \lbrace 0\rbrace\right).
$$
In particular $v$ is equivalent to a function of class $C^\infty\left(\R^3 \setminus \lbrace0\rbrace\right)$ which is still denoted by $v$. Thanks to \eqref{eq:up-low-bound-v-eps-3D}, by B\^{o}cher's theorem, there exists a constant $\textbf{c}>0$  such that
$
v(x)=\frac{\textbf{c}}{|x|}.
$
The proof of the lemma is thus finished.
\end{proof}
We finish this section by the following estimates. Thanks to the decay estimates in Proposition \ref{TheoremA}, we have
\begin{lemma}\label{lem:estimates-for-3D}
There exists a constant $C>0$ such that for every  $\e,r\in (0,r_0/2)$ and for $s \in (0,2)$, we have 
\be\label{eq:est-nw-sq-3D}
  \int_{  \B_{r/\e}}   |\n w |^2 dx\leq C\max\left(1, \frac{\e}{r}\right),\qquad    \int_{  \B_{r/\e}}   | w |^2 dx\leq C \max\left(1 , \frac{r}{\e} \right), 
\ee
\be \label{eq:est-wnw-sq-3D} 
   \int_{  \B_{r/\e}}  w  |\n w|   dx\leq C \max\left(  1, \log\frac{r}{\e}\right),
\ee
\be \label{eq:est-nw-3D}
  \int_{ \B_{r/\e}}   |\n  w| dx \leq C \max\left(1 , \frac{r}{\e} \right), \qquad   \int_{ \B_{r/\e}}   |  w| dx \leq  C \max\left(1 , \frac{r^2}{\e^2} \right)   
\ee
%
and 
\be\label{eq:est-L2-star-3D}
  \e^2 \int_{\B_{r/\e}} |z|^{-s}|x|^2  w ^{2^*_s}  dx+\e   \int_{\B_{4r/\e}\setminus \B_{r/\e}} |z|^{-s}  w ^{2^*_s-1}  dx+  \int_{\R^3\setminus \B_{r/\e}} |z|^{-s}  w ^{2^*_s}  dx=o(\e).
\ee
\end{lemma}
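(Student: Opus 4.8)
The plan is to deduce every estimate directly from the pointwise bounds of Proposition~\ref{TheoremA}, splitting each integral over $\B_{r/\e}$ at the radius $|x|=1$. On $\B_{r/\e}\cap\{|x|\le1\}$ one invokes \eqref{eq:up-low-bound-w-3D} and \eqref{eq:up-low-bound-w-3D1}, so the integrands are controlled by $C$ or by $C|z|^{1-s_1}$; on $\B_{r/\e}\cap\{|x|\ge1\}$, which is contained in the Euclidean annulus $\{1\le|x|<r/\e\}$ (using $\B_\rho\subset\{|x|<\rho\}$), one uses $w(x)\le C_2/(1+|x|)$ from \eqref{eq:up-low-bound-w-3D} together with $|\n w(x)|\le C(1+|z|^{-s_1})|x|^{-2}$ from \eqref{eq:up-low-bound-w-3D2}. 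The structural fact that makes everything work is that all transverse weights are locally integrable against the $2$-dimensional $z$-measure because $s_1<2$ and $s<2$: this keeps $\int_{\{|x|<1\}}|z|^{1-s_1}\,dx$, $\int_{\{|x|<1\}}|z|^{-s}\,dx$ and, after passing to spherical coordinates $x=\sigma\theta$, the angular factors $\int_{S^2}|z(\theta)|^{-s_1}\,d\theta$ and $\int_{S^2}|z(\theta)|^{-s}\,d\theta$ finite, so that every far-field integral collapses to a one-dimensional integral $\int_1^{r/\e}\sigma^{a}\,d\sigma$.

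For the first bound in \eqref{eq:est-nw-sq-3D} I would simply use $\int_{\B_{r/\e}}|\n w|^2\le\int_{\R^3}|\n w|^2=\|w\|_{\calD^{1,2}(\R^3)}^2=:C\le C\max(1,\e/r)$; it is cleaner to invoke $w\in\calD^{1,2}(\R^3)$ here than to integrate the pointwise gradient bound, which is not square-integrable near the axis $\{z=0\}$ when $s_1\ge1$. For the second bound, $w\le C_2/(1+|x|)$ gives $\int_{\B_{r/\e}}w^2\le C\int_{\{|x|<r/\e\}}(1+|x|)^{-2}\,dx$, which is $O(1)$ when $r/\e\le1$ and $O(r/\e)$ for $r/\e$ large, i.e. $\le C\max(1,r/\e)$. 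The remaining $L^1$-type estimates \eqref{eq:est-wnw-sq-3D}--\eqref{eq:est-nw-3D} reduce, after the split at $|x|=1$ (the part inside contributing $O(1)$ from the integrable weight $|z|^{1-s_1}$ and $w\le C$), to $\int_1^{r/\e}\sigma^{a}\,d\sigma$: $a=-1$ for $\int w|\n w|$, giving $\max(1,\log(r/\e))$ (the $|z|^{-s_1}$-weighted piece being $\int_1^{r/\e}\sigma^{-1-s_1}\,d\sigma=O(1)$ since $s_1>0$); $a=0$ for $\int|\n w|$, giving $\max(1,r/\e)$; and $a=1$ for $\int|w|$, giving $\max(1,r^2/\e^2)$ --- which are exactly the claimed bounds.

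For \eqref{eq:est-L2-star-3D} I would again use $w\le C_2/(1+|x|)$, hence $w^{2^*_s}\le C(1+|x|)^{-(6-2s)}$ and $w^{2^*_s-1}\le C(1+|x|)^{-(5-2s)}$, and pass to spherical coordinates; a dyadic decomposition of the exterior then gives $\int_{\R^3\setminus\B_{r/\e}}|z|^{-s}w^{2^*_s}\,dx\le C(\e/r)^{3-s}$ and $\e\int_{\B_{4r/\e}\setminus\B_{r/\e}}|z|^{-s}w^{2^*_s-1}\,dx\le C\e(\e/r)^{2-s}$, while the first term satisfies $\e^2\int_{\B_{r/\e}}|z|^{-s}|x|^2w^{2^*_s}\,dx\le C\e^2\big(1+\int_1^{r/\e}\sigma^{s-2}\,d\sigma\big)$, which is $\le C\e^2$ for $s<1$, $\le C\e^2\log(2+r/\e)$ for $s=1$, and $\le C\e^{3-s}$ for $s>1$. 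Since $s<2$, every summand is little-$o$ of $\e$; I would make explicit the sense in which this holds --- with implied constants independent of $\e,r$, each term being a positive power of $\e/r$ times $\e$, so that the total is $o(\e)$ as $\e/r\to0$ (in particular for $r$ bounded below and $\e\to0$).

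The argument has no genuine obstacle --- it is essentially bookkeeping on integrals of explicit powers --- but three points deserve attention: (i) the local integrability of the transverse weights, which is precisely what $s_1<2$ and $s<2$ provide and what keeps the angular factors $\int_{S^2}|z(\theta)|^{-s}\,d\theta$ and the near-origin contributions finite; (ii) correctly matching the exponent of $r/\e$ produced by each radial integral to the stated $\max(1,\cdot)$ factor, which requires keeping the cases $r/\e\ge1$ and $r/\e<1$ separate; and (iii) for the Dirichlet-energy estimate, invoking finiteness of $\|w\|_{\calD^{1,2}(\R^3)}$ rather than integrating the crude pointwise gradient bound. The Hessian bounds in Proposition~\ref{TheoremA}, and the lower bound on $w$ in \eqref{eq:up-low-bound-w-3D}, are not needed for this lemma.
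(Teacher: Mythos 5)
Your proposal is correct, and it follows exactly the route the paper intends: the paper states Lemma~\ref{lem:estimates-for-3D} without proof, asserting only that it follows from the decay estimates of Proposition~\ref{TheoremA}, and your computation (splitting at $|x|=1$, using \eqref{eq:up-low-bound-w-3D}--\eqref{eq:up-low-bound-w-3D2}, the integrability of the transverse weights for $s,s_1<2$, and $w\in\calD^{1,2}(\R^3)$ for the Dirichlet term) is precisely the bookkeeping that justifies it. Your reading of the $o(\e)$ statement (constants uniform, smallness as $\e\to 0$ with $r$ bounded below, matching the paper's $\calO_r(\e)$ usage) is the sensible one and consistent with how the lemma is applied in Section~\ref{ss:proof-of-th-3D}.
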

\section{Proof of the main result}\label{ss:proof-of-th-3D}
Given $y_0\in \G\subset\O\subset \R^3$, we let $r_0$ as defined in \eqref{eq:def-r0}. For    $r\in (0, r_0/2)$,  we consider $F_{y_0}: Q_r\to \O$ parameterizing a neighborhood of $y_0$ in $\O$, with the property that $F_{y_0}(0)=y_0$,
\begin{equation}\label{Sym-dist}
\rho_\Gamma(F_{y_0}(x))= |z|, \qquad \textrm{ for all $x=(y, z)\in Q_r$}.
\end{equation}
Moreover in these local coordinates, we have
\begin{equation}\label{Metric}
g_{ij}(x)= \d_{ij}+O(|x|)
\end{equation}
and
\begin{equation}\label{DeterminantMetric}
\sqrt{|g|}(x)=1+\langle A, z\rangle+O\left(|x|^2\right),
\end{equation}
where $A\in \R^2$ is the vector curvature of $\Gamma$ and  $|g|$ stands for the determinant of $g$, see \cite{Fall-Thiam} for more details related to this parametrization.\\

Next, for   $\e>0$,  we consider  $u_\e: \O \to  \R$ given  by
$$
u_\e(y):=\e^{-1/2} \eta_r(F^{-1}_{y_0}(y)) w \left(\frac{F^{-1}_{y_0}(y)}{\e} \right).
$$
We can now define the test function $\Psi_\e:\O\to \R$ by
\be 
\label{eq:TestFunction-Om-3D}
\Psi_\e\left(y\right)=u_\e(y)+\e^{1/2}  \textbf{c}\,  \eta_{2r}(F^{-1}_{y_0}(y) ){M}_{y_0}(F^{-1}_{y_0}(y) ).
\ee
It is plain that $\Psi_\e\in H^1_0(\O)$ and 
$$
\Psi_\e\left(F_{y_0}(x)\right)=\e^{-1/2} \eta_r(x) w \left(\frac{x}{\e} \right)+\e^{1/2}  \textbf{c} \,  \eta_{2r}(x) {M}_{y_0}(x) \qquad\textrm{ for every $x\in \R^N$.}
$$
To alleviate the notations, we will write $\e$ instead of $\e_n$ and  we will remove the subscript  $y_0$, by writing $M$ and $F$ in the place of ${M}_{y_0}$ and $F_{y_0}$ respectively.
We define 
$$
\ti \eta_r(y):=\eta_r(F^{-1}(y)),\qquad V_\e(y):=v_\e(F^{-1}(y)) \qquad\textrm{ and } \qquad \ti M_{2r}(y):=\eta_{2r}(F^{-1}(y) )  M ( F^{-1}(y))  ,
$$
where $v_\e(x)=\e^{-1} w\left(\frac{x}{\e} \right).$ With these notations, \eqref{eq:TestFunction-Om-3D} becomes
\be  \label{eq:def-W-eps-3D}
\Psi_\e (y) = u_\e(y)+ \e^{\frac{1}{2}}  \textbf{c}\, \ti M_{2r}(y)= \e^{\frac{1}{2}}   V_\e(y)+    \e^{\frac{1}{2}}  \textbf{c}\, \ti M_{2r}(y) .
\ee
In the sequel we define $\mathcal{O}_{r, \e}$ as
$$
\lim_{r \to 0}\frac{\mathcal{O}_{r, \e}}{\e}=0. 
$$
Then we have the following.
\begin{lemma}\label{lem:expans-num-3D} 
We have 
\begin{align} \label{eq:Dirichlet-Psi-eps000}
\int_\O |\n \Psi_\e|^2 dy+  \int_{\O} h  |\Psi_\e|^2 dy=&\int_{\R^3} |\n w|^2 dx  +\pi \e \textbf{m}(y_0) \textbf{c}^2+\calO_r(\e),
\end{align}
as $\e \to 0$.
\end{lemma}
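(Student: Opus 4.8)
The plan is to expand the quadratic form $Q(\Psi_\e):=\int_\O|\n\Psi_\e|^2\,dy+\int_\O h|\Psi_\e|^2\,dy$ by writing $\Psi_\e=\e^{1/2}V_\e+\e^{1/2}\textbf{c}\,\ti M_{2r}$ as in \eqref{eq:def-W-eps-3D} and splitting
$$
Q(\Psi_\e)=\e\,Q(V_\e)+2\e\,\textbf{c}\,B(V_\e,\ti M_{2r})+\e\,\textbf{c}^2\,Q(\ti M_{2r}),
$$
where $B$ denotes the associated bilinear form. First I would pass from the manifold coordinates back to Euclidean ones: using \eqref{Metric}–\eqref{DeterminantMetric}, the metric is $\delta_{ij}+O(|x|)$ and $\sqrt{|g|}=1+\langle A,z\rangle+O(|x|^2)$ on $Q_r$, so each integral over $\O$ becomes the corresponding Euclidean integral over $Q_{2r}$ up to errors controlled by extra factors of $|x|$; these error terms will have to be absorbed into $\calO_r(\e)$ at the end using the estimates of Lemma \ref{lem:estimates-for-3D}.

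For the leading term $\e\,Q(V_\e)$: since $V_\e(y)=v_\e(F^{-1}(y))$ and $v_\e(x)=\e^{-1}w(x/\e)$, a change of variables $x=\e\xi$ turns $\e\int|\n v_\e|^2\,dx$ into $\int_{\B_{r/\e}}|\n w|^2\,d\xi$, and by \eqref{eq:est-L2-star-3D} together with the equation \eqref{eq:eq-for-v-eps} the tail contributes $o(\e)$, so this produces $\int_{\R^3}|\n w|^2\,dx$ up to $\calO_r(\e)$; the lower-order terms ($\int h V_\e^2$, the cut-off commutator terms from $\n(\eta_r v_\e)$, and the metric corrections) are $O(\e^2 \cdot \text{(something)})$ or controlled by \eqref{eq:est-nw-sq-3D}, hence $\calO_r(\e)$. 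For the last term $\e\,\textbf{c}^2 Q(\ti M_{2r})$: by \eqref{eq:regul-beta}, $M$ is bounded in $C^{1,\varrho}(Q_{r_0/2})$, so $Q(\ti M_{2r})=O(1)$ and this term is $O(\e)$ but does not have the clean form claimed — so it must be that this term combines with the cross term. Indeed the main point is the cross term $2\e\,\textbf{c}\,B(V_\e,\ti M_{2r})$: I would integrate by parts, moving the operator $-\D+h$ onto one factor. Using that $-\D v_\e = \l\e^{2-s_1}|z|^{-s_1}v_\e^{2^*_{s_1}-1}+\e^{2-s_2}|z|^{-s_2}v_\e^{2^*_{s_2}-1}$ has a small right-hand side after integration (again \eqref{eq:est-L2-star-3D}), and that $(-\D+h)M = -(-\D+h)(\eta_r/|x|)$ by \eqref{C9}, \eqref{eq:Green-3D}, \eqref{eq:Green-R3-3D}, the cross term reduces, after integrating by parts once more, to a boundary-type expression concentrated where $\eta_r$ transitions, plus the local contribution near $0$. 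The contribution near $0$ is the key one: since $v_\e(x)\to \textbf{c}/|x|$ and $\n v_\e(x)\to -\textbf{c}\,x/|x|^3$ by Lemma \ref{lem:v-to-cR}, and $M(0)=\textbf{m}(y_0)$ by \eqref{eq:def-mass}, the cross term and the quadratic $M$-term together converge (after the $\e$ scaling) to a Green-type identity of the form $2\textbf{c}^2\textbf{m}(y_0)\cdot(\text{capacity-type constant})$ minus a self-energy piece, and the constant $\pi$ emerges from $\int_{\del \B_\delta} \langle \n(1/|x|),\nu\rangle \cdot (1/|x|)\,d\sigma$-type computations on the $2$-codimensional picture, i.e. from the normalization of the fundamental solution $1/|x|$ of $-\Delta$ in $\R^3$.

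Concretely, I would make the cross term rigorous by the classical device: for small $\delta>0$ write $B(V_\e,\ti M_{2r})$ as an integral over $Q_{2r}\setminus \B_\delta$ plus an integral over $\B_\delta$; on $Q_{2r}\setminus\B_\delta$ use the $C^1_{loc}$ convergence $V_\e\to \textbf{c}/|x|$ of Lemma \ref{lem:v-to-cR} and the harmonicity of $1/|x|$ and of the regular part to reduce everything to the sphere $\del\B_\delta$; on $\B_\delta$ use the upper bound \eqref{eq:up-low-bound-v-eps-3D} and $C^{1,\varrho}$ bound on $M$ to show the contribution is $O(\delta)$ uniformly in $\e$; then let $\e\to0$ and $\delta\to 0$. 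The surface integrals on $\del\B_\delta$ are the standard ones that give $\textbf{m}(y_0)$ against $4\pi$ in $\R^3$; tracking the factor $\textbf{c}$ and the $\e^{1/2}$ scalings, and noting the quadratic $M$-term provides the other half, yields exactly $\pi\e\,\textbf{m}(y_0)\textbf{c}^2$.

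The main obstacle is the bookkeeping of the cross term: separating the genuinely convergent Green-function piece (which produces $\pi\textbf{m}(y_0)\textbf{c}^2$) from (a) the metric-distortion errors coming from \eqref{Metric}–\eqref{DeterminantMetric}, (b) the cut-off commutator terms supported in the annuli $\{r\le |x|\le 2r\}$ and $\{2r \le |x| \le 4r\}$, and (c) the contributions of the nonlinear right-hand side of \eqref{eq:eq-for-v-eps}. Each of (a),(b),(c) must be shown to be $\calO_r(\e)$, i.e. $o(\e)$ after $r\to 0$, and this is exactly what the estimates \eqref{eq:est-nw-sq-3D}–\eqref{eq:est-L2-star-3D} of Lemma \ref{lem:estimates-for-3D} are designed to deliver; the delicate one is \eqref{eq:est-wnw-sq-3D}, the $w|\n w|$ bound with the logarithm, which is what keeps the cross-term commutator under control in dimension $3$.
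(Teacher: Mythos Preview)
Your algebraic decomposition $Q(\Psi_\e)=\e\,Q(\ti\eta_r V_\e)+2\e\textbf{c}\,B(\ti\eta_r V_\e,\ti M_{2r})+\e\textbf{c}^2\,Q(\ti M_{2r})$ is fine in principle, but two of the claims you make about the individual pieces are false, and they hide the mechanism that actually produces the mass term. First, the assertion that ``the tail contributes $o(\e)$, so this produces $\int_{\R^3}|\n w|^2\,dx$ up to $\calO_r(\e)$'' is wrong: from $|\n w(x)|\le C|x|^{-2}$ one gets $\int_{\R^3\setminus\B_{r/\e}}|\n w|^2\,dx$ of order $\e/r$, not $o(\e)$; integrating by parts converts it into a genuinely small bulk term plus the boundary term $-\int_{\de\B_{r/\e}}w\,\de_\nu w\,d\s\to-\e\textbf{c}^2\int_{\de\B_r}\cR\,\de_\nu\cR\,d\s$, again of size $\e/r$. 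Second, the cut-off commutator terms in the annulus $Q_{2r}\setminus\B_r$ are \emph{not} $\calO_r(\e)$ either: for instance $\e\int v_\e^2|\n\eta_r|^2\,dx\sim\e/r$ by \eqref{eq:est-nw-sq-3D}. These $O(\e/r)$ contributions do not disappear individually; they must cancel against each other, and your outline never arranges that cancellation.

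The paper's proof is organised precisely to make this cancellation transparent. It splits spatially into $F(\B_r)$ and $\O\setminus F(\B_r)$ and, in the outer region, passes to the limit $v_\e\to\textbf{c}\cR$ (Lemma~\ref{lem:v-to-cR}) \emph{before} separating the square, so that the integrand becomes $\e\textbf{c}^2|\n(\eta_r\cR+M)|^2+\e\textbf{c}^2 h|\eta_r\cR+M|^2$, i.e.\ the quadratic form applied to the Green function $G(\cdot+y_0,y_0)$ itself. The equation $(-\D+h)G=0$ then integrates by parts to leave only boundary terms on $\de\B_r$, namely $-\e\textbf{c}^2\int_{\de\B_r}\cR\,\de_\nu\cR\,d\s-\e\textbf{c}^2\int_{\de\B_r}M\,\de_\nu\cR\,d\s$, with no bulk residual --- the cut-off commutators have been absorbed into $M$ by its very definition \eqref{C9}. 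The inner region produces $\int_{\R^3}|\n w|^2+\e\textbf{c}^2\int_{\de\B_r}\cR\,\de_\nu\cR\,d\s+\calO_r(\e)$, the $\cR\,\de_\nu\cR$ terms cancel exactly between the two regions, and what survives is $-\e\textbf{c}^2\int_{\de\B_r}M\,\de_\nu\cR\,d\s$, which gives $\textbf{m}(y_0)$. Your extra parameter $\d$ is unnecessary: the single scale $r$ (with $\e\ll r\ll1$) already plays that role, and the recombination into $G$ is the missing idea.
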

\begin{proof}
Recalling \eqref{eq:def-W-eps-3D}, 
direct computations give
\begin{align} \label{eq:nab-Psi}
\int_{F({Q}_{2r})\setminus F\left(\B_r\right)} |\n \Psi_\e|^2 dy&= \int_{F({Q}_{2r}) \setminus F\left(\B_r\right)} |\n \left( \ti \eta_r u_\e\right)|^2 dy+\e \textbf{c} ^2  \int_{F({Q}_{2r})\setminus F\left(\B_r\right)} |\n  \ti M_{2r} |^2 dy \nonumber\\
&+2 \e^{1/2}  \textbf{c}  \int_{F({Q}_{2r}) \setminus F\left(\B_r\right)} \n \left( \ti \eta_r u_\e\right) \cdot \n \ti M_{2r} dy \nonumber\\
 &= \e \int_{F({Q}_{2r}) \setminus F\left(\B_r\right)} |\n \left( \ti \eta_r V_\e\right)|^2 dy+\e  \textbf{c}^2   \int_{F({Q}_{2r}) \setminus F\left(\B_r\right)} |\n \ti  M_{2r} |^2 dy \nonumber\\
&+2 \e  \textbf{c}  \int_{F({Q}_{2r})\setminus F\left(\B_r\right)} \n \left( \ti \eta_r V_\e\right) \cdot \n \ti M_{2r} dy. 
\end{align}
By \eqref{eq:def-cut-off-cylind}, $\eta_r v_\e= \eta_r \e^{-1}w(\cdot/\e)$ is cylindrically symmetric. Therefore by the change   variable $y=F(x)$ and  using \eqref{Metric}, we get
\begin{align}\label{eq:enev}
\e \int_{F({Q}_{2r}) \setminus F\left(\B_r\right)} |\n \left( \ti \eta_r V_\e\right)|^2 dy&= \e \int_{{Q}_{2r} \setminus  \B_r } |\n \left(  \eta_r {v}_\e\right)|^2_g \sqrt{g} dx \nonumber\\
&= \e \int_{{Q}_{2r} \setminus  \B_r} |\n \left(  \eta_r {v}_\e\right)|^2 dx+O\left(\e  r^2 \int_{{Q}_{2r} \setminus  \B_r} |\n \left(  \eta_r {v}_\e\right)|^2 dx \right).
\end{align}
By computing, we find that 
\begin{align*}
\e  \int_{{Q}_{2r} \setminus  \B_r} |\n \left(  \eta_r {v}_\e\right)|^2 dx& \leq \e \int_{{Q}_{2r} \setminus  \B_r}   |\n  {v}_\e |^2 dx+\e  \int_{{Q}_{2r} \setminus  \B_r} v_\e^2 |\n \eta_r  |^2 dx + 2 \e \int_{{Q}_{2r} \setminus  \B_r} v_\e  |\n v_\e|| \n\eta_r|   dx  \nonumber\\
& \leq \e\int_{{Q}_{2r} \setminus  \B_r}   |\n  {v}_\e |^2 dx+ \frac{C}{r^2} \e\int_{{Q}_{2r} \setminus  \B_r} v_\e^2  dx + \frac{C}{r}  \e \int_{{Q}_{2r} \setminus  \B_r} v_\e  |\n v_\e|   dx \nonumber\\
&=\int_{\B_{2r/\e} \setminus  \B_{r/\e}}   |\n w |^2 dx+C \frac{\e}{r^2} \int_{\B_{2r/\e} \setminus  \B_{r/\e}}w^2  dx + \frac{C}{r}  \e \int_{\B_{2r/\e} \setminus  \B_{r/\e}}  w  |\n w|   dx.
\end{align*}
From  this and \eqref{eq:est-nw-sq-3D} and \eqref{eq:est-wnw-sq-3D}, we get 
$$
O\left(\e  r^2 \int_{{Q}_{2r} \setminus  \B_r} |\n \left(  \eta_r {v}_\e\right)|^2 dx \right)= \calO_r(\e).
$$
We replace  this in \eqref{eq:enev} to  have 
\begin{align}\label{eq:int-nv-et-v}
\e \int_{F({Q}_{2r}) \setminus F\left(\B_r\right)} |\n \left( \ti \eta_r  V_\e\right)|^2 dy&= \e \int_{{Q}_{2r} \setminus  \B_r}   |\n (\eta_r {v}_\e) |^2 dx + \calO_r(\e).
\end{align}
We have the following estimates
\be \label{eq:est-nv-eps}
0\leq v_\e\leq C |x|^{-1}\quad  \textrm{ for $x\in \R^3\setminus\{0\}$ }\qquad  \textrm{  and  }\qquad |\n v_\e(x)| \leq C |x|^{-2} \quad\textrm{ for $|x|\geq \e$, }
\ee
which easily follows from \eqref{eq:up-low-bound-w-3D}, \eqref{Metric}  and \eqref{eq:Green-R3-3D}.  
By these estimates, \eqref{Metric}, \eqref{DeterminantMetric} and \eqref{eq:regul-beta} together with  the change of variable $y=F(x)$, we have 
\begin{align*} 
\e \int_{F({Q}_{2r})\setminus F\left(\B_r\right)} \n \left( \ti \eta_r V_\e\right) \cdot \n \ti M_{2r} dy=& \e  \int_{ {Q}_{2r}\setminus  \B_{r} } \n \left(  \eta_{ {r} }  v_\e \right) \cdot \n   M   dx\\
&+ O\left( \e  \int_{ {\B}_{2r}\setminus  \B_{r} }|\n v_\e| dx   + \frac{\e}{r}   \int_{ {\B}_{2r}\setminus  \B_{r} } v_\e dx \right)\nonumber\\
= &\e  \int_{ {\B}_{2r}\setminus  \B_{r} } \n \left(  \eta_{ {r} }  v_\e \right) \cdot \n   M  dx + \calO_r(\e).
\end{align*}
This  with \eqref{eq:int-nv-et-v}, \eqref{eq:regul-beta} and \eqref{eq:nab-Psi}  give 
\begin{align*}
\int_{F({Q}_{2r})\setminus F\left(\B_r\right)} |\n \Psi_\e|^2 dy&= \e \int_{ {Q}_{2r} \setminus  \B_r } |\n \left(  \eta_r  {v}_\e\right)|^2 dx+\e  \textbf{c}^2   \int_{ {Q}_{2r}  \setminus  \B_r } |\n (\eta_{2r} M )|^2 dx\\
&+2 \e  \textbf{c}  \int_{{Q}_{2r}\setminus  \B_r } \n \left(  \eta_r  {v}_\e\right) \cdot \n  M  dx+ \calO_r(\e). 
\end{align*}  
Thanks  to Lemma \ref{lem:v-to-cR} and \eqref{eq:est-nv-eps}, we can thus use the dominated convergence theorem to deduce that, as $\e\to 0$, 
\be \label{eq:claim1-est-num}
\int_{ {Q}_{2r} \setminus  \B_r } |\n \left(  \eta_r  {v}_\e\right)|^2 dx= \textbf{c}^2\int_{ {Q}_{2r} \setminus  \B_r } |\n \left(  \eta_r  \cR\right)|^2 dx+o(1).
\ee
Similarly, we easily see that 
$$
 \int_{{Q}_{2r}\setminus  \B_r } \n \left(  \eta_r  {v}_\e\right) \cdot \n  M  dx= \textbf{c} \int_{{Q}_{2r}\setminus  \B_r } \n \left(  \eta_r  \cR \right) \cdot \n  M  dx+o(1)\qquad\textrm{ as $\e\to 0$.}
$$
This and  \eqref{eq:claim1-est-num}, then give
\begin{align}
\int_{F({Q}_{2r})\setminus F\left(\B_r\right)} |\n \Psi_\e|^2 dy 
&= \e\textbf{c}^2 \int_{ {Q}_{2r} \setminus  \B_r } |\n \left(  \eta_r  \cR\right)|^2 dx+\e  \textbf{c}^2   \int_{ {Q}_{2r}  \setminus  \B_r } |\n   M |^2 dx \nonumber\\
&+2 \e  \textbf{c}^2  \int_{{Q}_{2r}\setminus  \B_r } \n \left(  \eta_r  \cR \right) \cdot \n  M  dx+ \calO_r(\e) \nonumber\\
&=\e\textbf{c}^2 \int_{{Q}_{2r}\setminus  \B_r }  | \n ( \eta_r \cR+ M )|^2 dx+  \calO_r(\e). \label{eq:nPs-B2r-Br}
\end{align}
Since the support of $\Psi_\e$ is contained in $ \B_{4r}$ while the one of  $\eta_r$  is in ${Q}_{2r}$,  it is easy to deduce  from  \eqref{eq:regul-beta} that 
\begin{align*}
\int_{\O\setminus F\left({Q}_{2r}\right)} |\n \Psi_\e|^2 dy&= \e \textbf{c} ^2  \int_{F(\B_{4r})\setminus F\left({Q}_{2r}\right)} |\n  \ti M_{2r} |^2 dy=  \calO_r(\e)
\end{align*}
and from Lemma \ref{lem:estimates-for-3D}, that
$$
\int_{\O\setminus F\left(\B_r\right)}h | \Psi_\e|^2 dy=\e \textbf{c} ^2  \int_{F(\B_{4r})\setminus F\left(\B_{r}\right)} h | \eta_r V_\e +\ti M_{2r} |^2 dy=   \calO_r(\e).
$$
Therefore by \eqref{eq:nPs-B2r-Br}, we conclude that 
\begin{align*}
\int_{\O\setminus F\left(\B_r\right)} |\n \Psi_\e|^2 dy&+\int_{\O \setminus F\left(\B_r\right)}h | \Psi_\e|^2 dy \\
&\qquad=\e \textbf{c}^2  \int_{{Q}_{2r}\setminus  \B_r }  | \n ( \eta_r \cR+ M )|^2 dx+\e  \textbf{c}^2 \int_{{Q}_{2r}\setminus \B_r  }h(\cdot+y_0) | \eta_r \cR+ M  |^2 dx+ \calO_r(\e).
\end{align*}
Recall that $G(x+y_0,y_0)= \eta_r(x) \cR(x)+ M(x )$  for ever $x\in {Q}_{2r}$ and that  by \eqref{eq:Green-3D},   
 $$
 -\D_x G(x+y_0,y_0)+h(x+y_0) G(x+y_0,y_0)=0  \qquad \textrm{ for every  $x \in Q_{2r}\setminus Q_r $. }
 $$
 Therefore,   by integration by parts,   we find that
\begin{align*} 
\int_{\O\setminus F\left(\B_r\right)} |\n \Psi_\e|^2 dy+&\int_{\O\setminus F\left(\B_r\right)}h | \Psi_\e|^2 dy=\textbf{c}^2\int_{\de ({{Q}_{2r}\setminus \B_r )}} ( \eta_r \cR+ M ) \frac{\de ( \eta_r \cR+ M )}{\de \ov \nu}\s(x)+\calO_r(\e),
\end{align*} 
where $\ov \nu $  is the exterior normal vectorfield to ${Q}_{2r}\setminus \B_r  $. 
Thanks to   \eqref{eq:regul-beta}, we finally get 
\begin{align}\label{eq:nPsi-Ome-Br}
\int_{\O\setminus F\left(\B_r\right)} |\n \Psi_\e|^2 dy+&\int_{\O\setminus F\left(\B_r\right)}h | \Psi_\e|^2 dy 
&=-\e  \textbf{c}^2\int_{\de  \B_r } {\cR} \frac{\de {\cR}}{\de \nu} d\s(x)- \e \textbf{c}^2 \int_{\de  \B_r } M   \frac{\de {\cR}}{\de \nu} d\s(x) 
 + \calO_r(\e),
\end{align} 
where $\nu$ is  the exterior normal vectorfield to $  \B_r  $.\\
 Next we make the  expansion of $\int_{F\left(\B_r\right)} |\n \Psi_\e|^2 dy$ for $r$ and $\e$ small. First, we observe that, by  Lemma \ref{lem:estimates-for-3D} and \eqref{eq:regul-beta}, we have 
\begin{align*}
\int_{F\left(\B_r\right)}& |\n \Psi_\e|^2 dy =\int_{F\left(\B_r\right)} |\n u_\e|^2 dy+\e \textbf{c}^2 \int_{F\left(\B_r\right)} |\n M |^2 dy+2\e^{1/2}\textbf{c} \int_{F\left(\B_r\right)} \n u_\e \cdot \n \ti M_{2r} dy\\
&= \int_{ \B_{r/\e}} |\n  w|^2 dx +O\left( \e^2 \int_{ \B_{r/\e}} |x|^2 |\n  w|^2 dx +\e^{2}   \int_{   \B_{r/\e} } |\n  w |   dx \right) +\calO_r(\e) =  \int_{ \B_{r/\e}} |\n  w|^2 dx+ \calO_r(\e).
\end{align*}
By integration by parts and using \eqref{eq:est-L2-star-3D}, we deduce that 
\begin{align}\label{eq:expans-num-FQr}
\int_{F\left(\B_r\right)} |\n \Psi_\e|^2 dy 
&=\int_{\R^3} |\n w|^2 dx+ \int_{\de  \B_{r/\e}} w\frac{\de w }{\de \nu } d\s(x)+\calO_r(\e) \nonumber\\
&=\int_{\R^3} |\n w|^2 dx  + \e\int_{\de  \B_{r}} v_\e\frac{\de v_\e }{\de \nu } d\s(x) + \calO_r(\e). 
\end{align}
Now  \eqref{eq:est-nv-eps},   \eqref{eq:nv-eps-to-nv-C1} and the dominated convergence theorem yield, for  fixed $r>0$ and $\e\to 0$,
\begin{align}\label{eq:d-eps-nv-eps}
\int_{\de  \B_{r}} v_\e\frac{\de v_\e }{\de \nu } d\s(x)&=\int_{\de B^2_{\R^2}(0,r)}\int_{-r}^r v_\e(t,z)\n v_\e(t,z)\cdot\frac{z}{|z|} d\s(z) dt+2\int_{B^2_{\R^2}} v_\e(r,z)  \de _tv_\e(r,z) dz \nonumber\\
&= \textbf{c}^2\int_{\de B^2_{\R^2}(0,r)}\int_{-r}^r \cR(t,z)\n \cR(t,z)\cdot\frac{z}{|z|} d\s(z) dt+2 \textbf{c}^2\int_{B^2_{\R^2}} \cR(r,z) \de _t \cR(r,z) dz+o(1)\nonumber\\
& = \textbf{c}^2\int_{\de  \B_r } {\cR} \frac{\de {\cR}}{\de \nu} d\s(x) + o(1).
\end{align}
Moreover  \eqref{eq:est-nw-3D} implies that
\begin{align*}
  \int_{F(\B_{r})} h  \Psi_\e^2 dy  = \calO_r(\e).
\end{align*} 
From this together with \eqref{eq:expans-num-FQr} and \eqref{eq:d-eps-nv-eps}, we obtain
\begin{align*}
\int_{F\left(\B_r\right)} |\n \Psi_\e|^2 dy +  \int_{F(\B_{r})} h  \Psi_\e^2 dy
&=  \int_{\R^3} |\n w|^2 dx  +  \textbf{c}^2\e\int_{\de  \B_r } {\cR} \frac{\de {\cR}}{\de \nu} d\s(x)  +\calO_r(\e). 
\end{align*}
Combining this with \eqref{eq:nPsi-Ome-Br}, we then  have 
\begin{align} \label{eq:Dirichlet-Psi-eps}
\int_\O |\n \Psi_\e|^2 dy+  \int_{\O} h  \Psi_\e^2 dy=&\int_{\R^3} |\n w|^2 dx    - \e \textbf{c}^2\int_{\de \B_r } M \frac{\de {\cR}}{\de \nu} d\s(x)+\calO_r(\e) +o\left(\e\right).
\end{align}
Recalling that $\cR(x)=\frac{1}{|x|}$,  we have 
\begin{align*}
\int_{\de \B_r}\frac{\de \cR}{\de \nu}\, d\s(x)&=-  \int_{\de \B_r}\frac{x\cdot \nu (x)}{|x|^3}\, d\s(x)= \int_{ B_{\R^2}(0, r)}\frac{- 2 r   }{r^2+|z|^2}\, dz - 2\pi\int_{-r}^r\frac{r^3}{r^2+t^2}dt=- \pi^2 (1+ r^2)   .
\end{align*}
Since (recalling \eqref{eq:def-mass}) $M(y)=M(0)+O(r)=\textbf{m}(y_0)+O(r)$ in $\B_{2r}$, we get \eqref{eq:Dirichlet-Psi-eps000}. This then ends the proof.
\end{proof}
We finish by the following expansion
\begin{lemma}\label{lem:expans-denom-3D}
\begin{align*}
\frac{\l}{2^*_{s_1}}\int_{\O} \rho^{-s_1}_\G |\Psi_{\e}|^{2^*_{s_1}} dy&+\frac{1}{2^*_{s_2}}\int_{\O} \rho^{-s_2}_\G |\Psi_{\e}|^{2^*_{s_2}} dy= \frac{\l}{2^*_{s_1}}\int_{\R^3} |z|^{-s_1} |w|^{2^*_{s_1}} dx\\\\
&+\frac{1}{2^*_{s_2}}\int_{\R^3} |z|^{-s_2} |w|^{2^*_{s_2}} dx+{\e} \pi^2 \textbf{c}^2 \textbf{m}(y_0)+\calO_r(\e).
\end{align*}
\end{lemma}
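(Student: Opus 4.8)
The plan is to reduce everything to the core region $F(\B_r)$, where the bubble concentrates, and to show that the rest contributes only $\calO_r(\e)$. On $\B_r$ we have $\eta_r\equiv\eta_{2r}\equiv1$, so $\Psi_\e\circ F=\e^{-1/2}w(\cdot/\e)+\e^{1/2}\textbf{c}\,M$; outside $F(\B_{4r})$ we have $\Psi_\e\equiv0$. Fix $s\in\{s_1,s_2\}$. On the intermediate shells $F(\B_{2r})\setminus F(\B_r)$ and $F(\B_{4r})\setminus F(\B_{2r})$ I would use the elementary bound $|a+b|^{2^*_s}\le C(|a|^{2^*_s}+|b|^{2^*_s})$, the $C^{1}$ bound \eqref{eq:regul-beta} for $M$, the change of variables $y=F(x)$, and --- for the $w$-part --- the rescaling $x\mapsto\e x$ which turns $\B_r$ into $\B_{r/\e}$: the part carrying $w$ is then bounded by $C\int_{\R^3\setminus\B_{r/\e}}|z|^{-s}w^{2^*_s}\,dx=o(\e)$ by \eqref{eq:est-L2-star-3D}, and the part carrying $M$ by $C\e^{2^*_s/2}\int_{\B_{4r}}|z|^{-s}\,dx\le C\e^{3-s}r^{3-s}=\calO_r(\e)$, since $2^*_s/2=3-s$ and $|z|^{-s}\in L^{1}_{\loc}(\R^3)$ for $s<2$. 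Hence $\int_\O\rho_\G^{-s}|\Psi_\e|^{2^*_s}\,dy=\int_{F(\B_r)}\rho_\G^{-s}|\Psi_\e|^{2^*_s}\,dy+\calO_r(\e)$.

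On the core, using \eqref{Sym-dist}, \eqref{DeterminantMetric} and the same rescaling, and because the powers balance ($2^*_s/2+s-3=0$),
$$
\int_{F(\B_r)}\rho_\G^{-s}|\Psi_\e|^{2^*_s}\,dy=\int_{\B_{r/\e}}|z|^{-s}\,\abs{\,w(x)+\e\,\textbf{c}\,M(\e x)\,}^{2^*_s}\,\sqrt{|g|}(\e x)\,dx .
$$
Since $w>0$, I would expand via $\abs{\,\abs{a+b}^{p}-a^{p}-p\,a^{p-1}b\,}\le C\bigl(a^{p-2}b^{2}+\abs{b}^{p}\bigr)$ with $p=2^*_s\ge2$, $a=w$, $b=\e\textbf{c}M(\e x)=O(\e)$, together with $\sqrt{|g|}(\e x)=1+\langle A,\e z\rangle+O(\e^{2}|x|^{2})$. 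The leading term is $\int_{\B_{r/\e}}|z|^{-s}w^{2^*_s}\,dx=\int_{\R^3}|z|^{-s}w^{2^*_s}\,dx+o(\e)$, again by \eqref{eq:est-L2-star-3D}. The first metric correction $\e\int_{\B_{r/\e}}|z|^{-s}w^{2^*_s}\langle A,z\rangle\,dx$ \emph{vanishes identically}: by Proposition \ref{TheoremA}, $w$ depends only on $|y|$ and $|z|$, so the integrand is odd in $z$ while $\B_{r/\e}$ is symmetric under $z\mapsto-z$. This exact cancellation is the reason why, in contrast with the case $N\ge4$, the outcome is governed by the mass and not by the curvature $A$ of $\G$.

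There remains the interaction term $2^*_s\,\e\,\textbf{c}\int_{\B_{r/\e}}|z|^{-s}w^{2^*_s-1}M(\e x)\,dx$. Inserting $M(\e x)=M(0)+O(\e|x|)=\textbf{m}(y_0)+O(\e|x|)$ via \eqref{eq:regul-beta} and \eqref{eq:def-mass}, and using that $\int_{\R^3}|z|^{-s}w^{2^*_s-1}\,dx<\infty$ with tail $O\bigl((\e/r)^{2-s}\bigr)$ (from $w\le C|x|^{-1}$, cf.\ \eqref{eq:up-low-bound-w-3D}, and $s<2$), this equals $2^*_s\,\e\,\textbf{c}\,\textbf{m}(y_0)\int_{\R^3}|z|^{-s}w^{2^*_s-1}\,dx+\calO_r(\e)$. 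Finally, all the remaining error integrals reduce, after rescaling, to $\e^{2}\int_{\B_{r/\e}}|z|^{-s}w^{2^*_s-2}\,dx$, $\e^{2^*_s}\int_{\B_{r/\e}}|z|^{-s}\,dx$ and $\e^{2}\int_{\B_{r/\e}}|z|^{-s}|x|^{2}w^{2^*_s}\,dx$, each of which is $\calO_r(\e)$: the last one directly by \eqref{eq:est-L2-star-3D}, the first two because for $s<2$ we have $\int_1^{r/\e}\rho^{s-2}\,d\rho=O\bigl(\max(1,\log(r/\e),(r/\e)^{s-1})\bigr)$.

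Summing over $s\in\{s_1,s_2\}$, the factors $2^*_{s_i}$ produced by the interaction terms cancel those in the denominators, so that
\begin{align*}
\frac{\l}{2^*_{s_1}}\int_\O\rho_\G^{-s_1}|\Psi_{\e}|^{2^*_{s_1}}\,dy&+\frac{1}{2^*_{s_2}}\int_\O\rho_\G^{-s_2}|\Psi_{\e}|^{2^*_{s_2}}\,dy\\
&=\frac{\l}{2^*_{s_1}}\int_{\R^3}|z|^{-s_1}|w|^{2^*_{s_1}}\,dx+\frac{1}{2^*_{s_2}}\int_{\R^3}|z|^{-s_2}|w|^{2^*_{s_2}}\,dx+\e\,\textbf{c}\,\textbf{m}(y_0)\,I+\calO_r(\e),
\end{align*}
where $I:=\l\int_{\R^3}|z|^{-s_1}w^{2^*_{s_1}-1}\,dx+\int_{\R^3}|z|^{-s_2}w^{2^*_{s_2}-1}\,dx$. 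By \eqref{Euler-Lagrange1}, $I=\int_{\R^3}(-\D w)\,dx$, a convergent integral since each of its two pieces is finite. Integrating by parts over $B_R$ and letting $R\to\infty$, and using $v_\e\to\textbf{c}\,\cR$ and $\n v_\e\to-\textbf{c}\,x/|x|^{3}$ from Lemma \ref{lem:v-to-cR} --- equivalently $w(x)=\textbf{c}\,|x|^{-1}+o(|x|^{-1})$ and $\n w(x)=-\textbf{c}\,x\,|x|^{-3}+o(|x|^{-2})$ as $|x|\to\infty$, which also follows from the Green representation of $w$ --- the boundary term at infinity is $\textbf{c}$ times exactly the integral $\int_{\de\B_r}\frac{\de\cR}{\de\nu}\,d\sigma$ evaluated in the proof of Lemma \ref{lem:expans-num-3D}, so that $I=\pi^{2}\textbf{c}$. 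Therefore $\e\,\textbf{c}\,\textbf{m}(y_0)\,I=\e\,\pi^{2}\textbf{c}^{2}\textbf{m}(y_0)$, which is the asserted expansion. I expect the main obstacle to be precisely this last step --- pinning down the exact geometric constant in front of $\e\,\textbf{c}^{2}\textbf{m}(y_0)$ and verifying that it coincides with the one from the numerator expansion (Lemma \ref{lem:expans-num-3D}), so that the two mass contributions are coherent; technically, the recurring delicate point is that the hypothesis $s<2$ is exactly what keeps every weighted remainder integral convergent near $\{z=0\}$.
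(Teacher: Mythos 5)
Your decomposition (core $F(\B_r)$ plus shells), the error estimates via \eqref{eq:est-L2-star-3D} and \eqref{eq:regul-beta}, the power balance after rescaling, and the oddness cancellation of the curvature term are exactly the paper's steps. The genuine difference is the treatment of the interaction term. The paper keeps the slowly varying factor $B_\e(x)=M(\e x)\sqrt{|g|}(\e x)$ inside the integrals, multiplies the equation \eqref{A1A1} for $w$ by $B_\e$ and integrates by parts over $\B_{r/\e}$, so that the weighted sum of the two interaction integrals becomes the boundary flux $-\e\int_{\de \B_r}B_1\,\frac{\de v_\e}{\de\nu}\,d\s$, evaluated by Lemma \ref{lem:v-to-cR} and an explicit computation of $\int_{\de \B_r}\frac{\de\cR}{\de\nu}\,d\s$. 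You instead freeze $M(\e x)=\textbf{m}(y_0)+O(\e|x|)$ first and then evaluate the universal constant $I=\l\int_{\R^3}|z|^{-s_1}w^{2^*_{s_1}-1}dx+\int_{\R^3}|z|^{-s_2}w^{2^*_{s_2}-1}dx=\int_{\R^3}(-\D w)\,dx$ as a flux at infinity. Both routes rest on the same two ingredients (the equation for $w$ and the asymptotics of Lemma \ref{lem:v-to-cR}); yours is slightly cleaner because cut-off and metric factors disappear before the integration by parts. Two points you should make explicit: Lemma \ref{lem:v-to-cR} gives the asymptotics only along a sequence $\e_n\to 0$, so the flux should be computed along radii $R_n=r/\e_n$, which suffices since $\int_{\R^3}|\D w|\,dx<\infty$ makes the full limit exist; and for $\l<0$ you need each of the two integrals in $I$ to be finite separately, which your tail estimate does provide.

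The one real gap is your last step, $I=\pi^2\textbf{c}$: you do not compute the boundary term of your own argument but import the value of $\int_{\de\B_r}\frac{\de\cR}{\de\nu}\,d\s$ quoted in Lemma \ref{lem:expans-num-3D}. Carried out honestly, your method gives $I=-\lim_{R\to\infty}\int_{\de B_R}\frac{\de w}{\de\nu}\,d\s=4\pi\textbf{c}$, because $\int_{\de D}\frac{\de\cR}{\de\nu}\,d\s=-4\pi$ for \emph{every} bounded domain $D$ containing the origin: this is just $-\D\cR=4\pi\d_0$ together with \eqref{eq:Green-R3-3D} and the divergence theorem, and in particular the flux cannot depend on the shape or size of the surface. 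The value $-\pi^2(1+r^2)$ for the cylinder $\de\B_r$ used in the paper is therefore not correct (its $r$-dependence is already a contradiction; a direct evaluation of the two discs and the lateral face gives $-4\pi$). So your argument, correctly completed, proves the expansion with mass coefficient $4\pi\,\e\,\textbf{c}^2\textbf{m}(y_0)$ rather than $\pi^2\e\,\textbf{c}^2\textbf{m}(y_0)$; the same correction applies to the paper's proof and to the coefficient stated in Lemma \ref{lem:expans-num-3D}, and since only the sign and the mutual consistency of the two coefficients enter the proof of Theorem \ref{th:main1}, nothing is lost there. But as written, your final identification is a deferral to a miscalculated surface integral, not a proof; replace it with the divergence-theorem computation above and state the lemma with the constant $4\pi$.
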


\begin{proof}
Let $p>2$. Then there exists a positive constant $C(p)$  such that
\begin{equation*}
||a+b|^p-|a|^{p}-p ab |a|^{p-2}| \leq C(p) \left(|a|^{p-2} b^2+|b|^{p}\right)\qquad\textrm{  for all $a,b \in \R$.}
\end{equation*}
As a consequence, we obtain, for $s \in (0,2)$, that
\begin{align}\label{eq:expan-L-2-star-1}
\displaystyle\int_{\O} &\rho^{-s}_\G |\Psi_{\e}|^{2^*_s} dy= \displaystyle \int_{F(\B_{r})} \rho^{-s}_\G |u_{\e}+ \e^{\frac{1}{2}} \ti {M}_{2r}|^{2^*_s} dy+ \int_{F(\B_{4r}) \setminus F(\B_{r})} \rho^{-s}_\G |W_{\e}+ \e^{\frac{1}{2}} \ti {M}_{2r}|^{2^*_s} dy \nonumber\\
&=\displaystyle \int_{F(\B_{r})} \rho^{-s}_\G | u_{\e}|^{2^*_s} dy
+
2^*_s \textbf{c} {\e}^{1/2} \int_{F(\B_{r})} \rho^{-s}_\G | u_{\e}|^{2^*_s-1}   \ti M_{2r} dy \nonumber\\
&\quad  \displaystyle + O\left(\int_{F\left(\B_{ 4r}\right)}   \rho^{-s}_\G |\eta_r u_{\e}|^{2^*_s-2} \left({\e}^{1/2}\ti {M}_{2r} \right)^2 dy
+ \int_{F\left(\B_{ 4r}\right)} \rho^{-s}_\G |{\e}^{1/2} \ti {M}_{2r} |^{2^*_s} dy\right) \nonumber\\
&\quad  \displaystyle+O \left(  \int_{F(\B_{4r}) \setminus F(\B_{r})} \rho^{-s}_\G | u_{\e}|^{2^*_s} dy
+
2^*_s \textbf{c} {\e}^{1/2} \int_{F(\B_{4r}) \setminus F(\B_{r})} \rho^{-s}_\G | u_{\e}|^{2^*_s-1}   \ti {M}_{2r} dy   \right).
\end{align}
By H\"{o}lder's inequality and \eqref{DeterminantMetric}, we have
\begin{align}\label{eq:expan-L-2-star-2}
\int_{F\left(\B_{ 4r}\right)}   \rho^{-s}_\G |\eta u_{\e}|^{2^*_s-2} \left({\e}^{1/2}\ti \b_r \right)^2 dy& \leq \e \|u_{\e}\|_{L^{2^*_s}(F(\B_{4 r} );\rho^{-s})}^{{2^*_s-2}}\|\ti{M}_{2r}\|_{L^{2^*_s}(F(\B_{4 r} );\rho^{-s}_\G)}^{{2} }\nonumber\\
&=\e \|w\|_{L^{2^*_s}( \B_{ 4r};|z|^{-s}\sqrt{|g|} )}^{{2^*_s-2}}  \|\ti{M}_{2r}\|_{L^{2^*_s}(F(\B_{4 r} );\rho^{-s}_\G)}^{{2} }\nonumber\\
&\leq \e (1+ Cr)\|\ti{M}_{2r}\|_{L^{2^*_s}(F(\B_{ 4r} );\rho^{-s}_\G)}^{{2} }=\calO_r(\e).
\end{align}
Furthermore, since $2^*_s>2$, by \eqref{eq:regul-beta}, we easily get 
\begin{align} \label{eq:expan-L-2-star-2-00}
\int_{F\left(\B_{4 r}\right)} \rho^{-s}_\G |{\e}^{1/2} \ti {M}_{2r} |^{2^*_s} dy=o(\e).
\end{align}
Moreover  by change of variables and \eqref{eq:est-L2-star-3D}, we also have
\begin{align*}
\int_{F(\B_{4r}) \setminus F(\B_{r})} \rho^{-s}_\G | u_{\e}|^{2^*_s} dy
+
2^*_s \textbf{c} {\e}^{1/2}  &\int_{F(\B_{4r}) \setminus F(\B_{r})} \rho^{-s}_\G | u_{\e}|^{2^*_s-1}   \ti M_{2r} dy\\
     \leq C& \int_{\B_{4r/\e} \setminus  \B_{r/\e}} |z|^{-s}  | w|^{2^*_s} dx
+
C   {\e}   \int_{\B_{4r/\e} \setminus  \B_{r/\e} } |z|^{-s}  | w|^{2^*_s-1}    dx  =o(\e).
\end{align*}
By this, \eqref{eq:expan-L-2-star-1}, \eqref{eq:expan-L-2-star-2-00} and \eqref{eq:expan-L-2-star-2}, it results
\begin{align*}
\displaystyle\int_{\O} \rho^{-s}_\G |\Psi_{\e}|^{2^*_s} dy&=  \displaystyle \int_{F(\B_{r})} \rho^{-s}_\G | u_{\e}|^{2^*_s} dy
+
2^*_s \textbf{c} {\e}^{1/2} \int_{F(\B_{r})} \rho^{-\s}_\G | u_{\e}|^{2^*_\s-1}   \ti M_{2r} dy +\calO_r(\e).
\end{align*}
We define  $B_\e(x):=M(\e x)  \sqrt{|g_\e|}(x) =M(\e x)  \sqrt{|g|}(\e x)$. Then
by    the change of variable $y=\frac{F(x)}{\e}$ in the above identity and recalling \eqref{DeterminantMetric}, then by oddness, we have 
\begin{align*}
\displaystyle\int_{\O} \rho^{-s}_\G |\Psi_{\e}|^{2^*_s} dy &=  \displaystyle\int_{\B_{r/\e}} |z|^{-s}  w ^{2^*_s} \sqrt{|g_\e|}dx
+
2^*_s {\e}  \textbf{c}  \int_{\B_{r/\e}}|z|^{-s} | w |^{2^*_s-1} B_\e dx+ \calO_r(\e)\\
&= \displaystyle\int_{\B_{r/\e}} |z|^{-s}  w ^{2^*_s} dx
+
2^*_s {\e}  \textbf{c}  \int_{\B_{r/\e}}|z|^{-s} | w |^{2^*_s-1} B_\e dx+ \calO_r(\e)\\
&\quad \displaystyle+  O\left(   \e^2\int_{\B_{r/\e}} |z|^{-s} |x|^2  w ^{2^*_s}  dx\right)\\
&=\displaystyle \int_{\R^3} |z|^{-s} |w|^{2^*_s} dx + 2^*_s {\e}  \textbf{c}  \int_{\B_{r/\e}}|z|^{-s} | w |^{2^*_s-1} B_\e dx\\
&\quad \displaystyle+O\left( \int_{\R^3\setminus \B_{r/\e}} |z|^{-s}  w ^{2^*_s}  dx+ \e^2\int_{\B_{r/\e}} |z|^{-s} |x|^2  w ^{2^*_s}  dx\right)+ \calO_r(\e).
\end{align*}
By \eqref{eq:est-L2-star-3D} we then have 
\begin{align} \label{eq:est-L-2star-Psi-not-ok}
\int_{\O} \rho^{-s}_\G |\Psi_{\e}|^{2^*_s} dy =
   \displaystyle \int_{\R^3} |z|^{-s} |w|^{2^*_s} dx
+
2^*_s {\e}  \textbf{c}  \int_{\B_{r/\e}}|z|^{-s} | w |^{2^*_s-1} B_\e(x)  dx +\calO_r(\e).
\end{align}
Therefore for $0<s_2<s_1<2$, we have
\begin{align*}
\frac{\l}{2^*_{s_1}}\int_{\O} \rho^{-s_1}_\G |\Psi_{\e}|^{2^*_{s_1}} dy&+\frac{1}{2^*_{s_2}}\int_{\O} \rho^{-s_2}_\G |\Psi_{\e}|^{2^*_{s_2}} dy= \frac{\l}{2^*_{s_1}}\int_{\R^3} |z|^{-s_1} |w|^{2^*_{s_1}} dx+\frac{1}{2^*_{s_2}}\int_{\R^3} |z|^{-s_2} |w|^{2^*_{s_2}} dx\\\\
&+{\e}  \textbf{c} \l \int_{\B_{r/\e}}|z|^{-s_1} | w |^{2^*_{s_1}-1} B_\e(x)  dx+{\e}  \textbf{c}  \int_{\B_{r/\e}}|z|^{-s_2} | w |^{2^*_{s_2}-1} B_\e(x)  dx +\calO_r(\e).
\end{align*}

We multiply \eqref{A1A1} by $B_\e\in \cC^1(\overline{Q_r})$ and we integrate by parts to get 
\begin{align*}
\l \int_{\B_{r/\e}}|z|^{-s_1} | w |^{2^*_{s_1}-1}  B_\e    dx+\int_{\B_{r/\e}}|z|^{-s_2} | w |^{2^*_{s_2}-1}  B_\e    dx& = \int_{ \B_{r/\e} } \n w \cdot \n B_\e dx -\int_{ \de \B_{r/\e} }B_\e  \frac{ \de  w}{\de \nu}  d\s(x)\\
&= \int_{ \B_{r/\e} } \n w \cdot \n B_\e dx-\int_{ \de \B_{r} }B_1  \frac{ \de  v_\e}{\de \nu}  d\s(x).
\end{align*}
Since $|\n B_\e|\leq C \e$,  by Lemma \ref{lem:v-to-cR} and \eqref{eq:regul-beta}, we then have 
$$
\e   \int_{ \B_{r/\e} } \n w \cdot \n B_\e dx  =O\left( \e^2  \int_{ \B_{r/\e} } |\n w| dx \right)= \calO_r(\e). 
$$
Consequently, on the one hand,
\begin{align*}
\l \e \int_{\B_{r/\e}}|z|^{-s_1} | w |^{2^*_{s_1}-1}  B_\e    dx+\e \int_{\B_{r/\e}}|z|^{-s_2} | w |^{2^*_{s_2}-1}  B_\e    dx
&=  -\e\int_{ \de \B_{r} }B_1  \frac{ \de  v_\e}{\de \nu}  d\s(x)+ \calO_r(\e).
\end{align*}
On the other hand    by Lemma \ref{lem:v-to-cR}, \eqref{eq:regul-beta}  and the dominated convergence theorem, we get
\begin{align*}
 \int_{ \de \B_{r} } B_1  \frac{\de  v_\e}{\de \nu}    d\s(x)= \textbf{c}  \int_{ \de \B_{r} } B_1  \frac{\de  \cR}{\de \nu}    d\s(x)+o(1)= \textbf{c} {M}(0) \int_{ \de \B_{r} }    \frac{\de  \cR}{\de \nu}    d\s(x)+O(r)+o(1),
\end{align*}
so that 
\begin{align*}
\l \e c\int_{\B_{r/\e}}|z|^{-s_1} | w |^{2^*_{s_1}-1}  B_\e    dx+\e c\int_{\B_{r/\e}}|z|^{-s_2} | w |^{2^*_{s_2}-1}  B_\e    dx
&=  -\e  \textbf{c}^2 M(0)\int_{ \de \B_{r} }   \frac{ \de  \cR}{\de \nu}  d\s(x)+ \calO_r(\e).
\end{align*}
It then follows from \eqref{eq:est-L-2star-Psi-not-ok} that
\begin{align*}
\frac{\l}{2^*_{s_1}}\int_{\O} \rho^{-s_1}_\G |\Psi_{\e}|^{2^*_{s_1}} dy&+\frac{1}{2^*_{s_2}}\int_{\O} \rho^{-s_2}_\G |\Psi_{\e}|^{2^*_{s_2}} dy= \frac{\l}{2^*_{s_1}}\int_{\R^3} |z|^{-s_1} |w|^{2^*_{s_1}} dx\\\\
&+\frac{1}{2^*_{s_2}}\int_{\R^3} |z|^{-s_2} |w|^{2^*_{s_2}} dx-{\e}  \textbf{c}^2 {M}(0) \int_{ \de \B_{r} } \frac{\de  \cR}{\de \nu}    d\s(x) +\calO_r(\e).
\end{align*}
Finally, recalling that $\cR(x)=\frac{1}{|x|}$,  we have 
\begin{align*}
\int_{\de \B_r}\frac{\de \cR}{\de \nu}\, d\s(x)&=-  \int_{\de \B_r}\frac{x\cdot \nu (x)}{|x|^3}\, d\s(x)= \int_{ B_{\R^2}(0, r)}\frac{- 2 r   }{r^2+|z|^2}\, dz - 2\pi\int_{-r}^r\frac{r^3}{r^2+t^2}dt=- \pi^2 (1+ r^2)   .
\end{align*}
Since ${M} (0)=\textbf{m}(y_0)$, see \eqref{eq:def-mass}, the proof of the lemma is thus finished.
\end{proof}
Now we are in position to complete the proof of our main result.
\begin{proof} \textbf{of Theorem \ref{th:main1}}\\
Combining  Lemma \ref{lem:expans-num-3D} and Lemma \ref{lem:expans-denom-3D} and recalling \eqref{Functional} and \eqref{Functional1}, we have
\begin{align} 
J\left(t u_\e\right)=\Psi(t w)+\mathcal{M}_{r, \e}(t w),
\end{align}
for some function $\mathcal{M}: \calD^{1, 2}(\R^N) \to \R$ satisfying
$$
\mathcal{M}_{r, \e}(w)=-\frac{\e}{2} c^2 \pi^2  m(y_0)+\mathcal{O}_{r, \e}.
$$
Since $2^*_{s_2}> 2^*_{s_1}$, $\Psi(tu_\e)$ has a unique maximum, we have
$$
\max_{t\geq 0} \Psi(tw)=\Psi(w)=\b^*.
$$
Therefore, the maximum of $J(t u_\e)$ occurs at $t_\e:=1+o_\e(1)$. 
Thanks to assumption \eqref{ExistenceAssumption}, we have
$$
\mathcal{M}_{r, \e}(w)<0.
$$
Therefore
$$
\max_{t \geq 0} J(t u_\e):= J(t_\e u_\e)\leq \Psi(t_\e w)+\e^2 \mathcal{G}(t_\e w) \leq \Psi(t_\e w) < \Psi(w)=\b^*.
$$
We thus get the desired result.
\end{proof}
%
%
%
%
%
%

\end{document}